\def\backslash{\delimiter"526E30F\mathopen{}}
\newcommand{\N}{\mathbb{N}}
\newcommand{\R}{\mathbb{R}}
\newcommand{\Q}{\mathbb{Q}}
\newcommand{\Z}{\mathbb{Z}}
\newcommand{\I}{\mathds{1}}
\newcommand{\T}{\mathbb{T}}
\newcommand{\rst}[1]{\ensuremath{{\mathbin\mid}\raise-.5ex\hbox{$#1$}}}
\newcommand{\lie}{\mathfrak{g}}
\newcommand{\lien}{\mathfrak{n}}
\newcommand{\suchthat}{\;\ifnum\currentgrouptype=16 \middle\fi|\;}
\newcommand{\compcent}[1]{\vcenter{\hbox{$#1\circ$}}}
\newcommand{\comp}{\mathbin{\mathchoice
    {\compcent\scriptstyle}{\compcent\scriptstyle}
    {\compcent\scriptscriptstyle}{\compcent\scriptscriptstyle}}}
\DeclareMathOperator{\GL}{GL}
\DeclareMathOperator{\Aut}{Aut}
\DeclareMathOperator{\Aff}{Aff}
\DeclareMathOperator{\aff}{aff}
\DeclareMathOperator{\Endo}{Endo}
\DeclareMathOperator{\Per}{Per}
\DeclareMathOperator{\ePer}{ePer}
\DeclareMathOperator{\Fix}{Fix}
\DeclareMathOperator{\ordz}{ord}
\newcommand{\ord}{\ordz_H}
\newcommand{\ordn}{\ordz_N}
\newcommand{\ordt}{\ordz_{\Z^2}}
\let\emptyset\varnothing
\author{Jonas Der\'e}
\address{KU Leuven Kulak, E. Sabbelaan 53, BE-8500 Kortrijk, Belgium}
\email{jonas.dere@kuleuven-kulak.be}
\thanks{The author is supported by a Ph.D.~fellowship of the Research Foundation -- Flanders (FWO). Research supported by the research Fund of the KU Leuven}
\title[Periodic points of affine infra-nilmanifold endomorphisms]{\bf Periodic and eventually periodic points of affine infra-nilmanifold endomorphisms}
\date{\today}
\subjclass[2010]{Primary: 37C25; Secondary: 20F18, 20F34, 22E25}
\newtheorem*{rep@theorem}{\rep@title}
\newcommand{\newreptheorem}[2]{%
\newenvironment{rep#1}[1]{%
 \def\rep@title{#2 \ref{##1}}%
 \begin{rep@theorem}}%
 {\end{rep@theorem}}}
\newtheorem{Thm}{Theorem}[section]
\newtheorem{Lem}[Thm]{Lemma}
\newtheorem{Prop}[Thm]{Proposition}
\newtheorem{Cor}[Thm]{Corollary}
\theoremstyle{definition}
\newtheorem{Def}[Thm]{Definition}
\newtheorem{Ex}[Thm]{Example}
\theoremstyle{remark}
\newtheorem{Rmk}[Thm]{Remark}
\newtheorem{QN}{Question}
\numberwithin{equation}{section}
\begin{document}
\begin{abstract}
In this paper, we study the periodic and eventually periodic points of affine infra-nilmanifold endomorphisms. On the one hand, we give a sufficient condition for a point of the infra-nilmanifold to be (eventually) periodic. In this way we show that if an affine infra-nilmanifold endomorphism has a periodic point, then its set of periodic points forms a dense subset of the manifold. On the other hand, we deduce a necessary condition for eventually periodic points from which a full description of the set of eventually periodic points follows for an arbitrary affine infra-nilmanifold endomorphism . 
\end{abstract}
\maketitle

Infra-nilmanifolds play an important role when studying certain dynamical systems. For example, M.~Gromov completed the proof that every manifold admitting an expanding map is homeomorphic to an infra-nilmanifold by showing that every group of polynomial growth is virtually nilpotent in \cite{grom81-1}. It is conjectured that the same result is true for manifolds admitting an Anosov diffeomorphisms and there are some partial results, for instance in the case of codimension one in \cite{newh70-1}. Expanding maps and Anosov diffeomorphisms were among the first examples of dynamical systems which are structural stable, meaning that small perturbations of these maps are topologically conjugate to the original map. For more details about these dynamical systems, we refer to seminal paper \cite{smal67-1}.

Expanding maps are also examples of chaotic dynamical systems as was proven in \cite{shub69-1}. One of the defining properties of such dynamical systems, see \cite{deva89-1}, is that the periodic points form a dense subset of the manifold. For general Anosov diffeomorphisms, it is still an open question whether the periodic points are dense, although there are some partial results, e.g.\ for $C^2$ Anosov diffeomorphisms in \cite{anos69-1} and for Anosov diffeomorphisms on infra-nilmanifolds in \cite{mann74-1}. 

In this paper, we study the periodic points and eventually periodic points of a more general class of self-maps on infra-nilmanifolds than expanding maps or Anosov diffeomorphisms, namely the affine infra-nilmanifold endomorphisms. These maps are induced by an affine transformation of the covering Lie group and are therefore the easiest examples of self-maps on infra-nilmanifolds. It is a result of K.B.~Lee in \cite{lee95-2} that every self-map of an infra-nilmanifold is homotopic to an affine infra-nilmanifold endomorphism and thus they form a rich class of self-maps on infra-nilmanifolds. In fact, the exact statement of M.~Gromov is that every expanding map is topological conjugate to an affine infra-nilmanifold endomorphism.

In \cite{hkl10-1}, the authors show that the set of eventually periodic points of an infra-nilmanifold endomorphism (which is induced by an automorphism on the covering Lie group) is dense in the infra-nilmanifold. The main idea of this paper is to construct subsets of eventually periodic points by using the relation between Lie algebras and Lie groups given by the Baker-Campbell-Hausdorff formula. In this paper, we improve the results of \cite{hkl10-1} in three directions. 

First of all, we study the more general class of affine infra-nilmanifold endomorphisms in this paper. This means that the maps we consider are not necessarily induced by an automorphism of the covering Lie group, but by an affine transformation. Secondly we show that not only the eventually periodic points, but also the smaller set of periodic points forms a dense subset for every affine infra-nilmanifold endomorphism which has at least one periodic point:
\begin{reptheorem}{maindense}
Let $\bar{\alpha}$ be an affine infra-nilmanifold endomorphism of the infra-nilmanifold $\Gamma \backslash G$. Then either $\Per(\bar{\alpha}) = \emptyset$ or $\Per(\bar{\alpha})$ is a dense subset of $\Gamma \backslash G$.
\end{reptheorem}
\noindent Finally, we give an explicit description of the set of eventually periodic points for a general affine infra-nilmanifold endomorphism in Theorem \ref{fulldes}.

This paper is structured as follows. First, we recall the basis properties of infra-nilmanifolds and affine infra-nilmanifold endomorphisms. Next we show how the general case of affine infra-nilmanifold endomorphisms reduces to the more restrictive case of nilmanifold endomorphisms. In the next two sections we then give sufficient and necessary conditions for points of the infra-nilmanifold to be (eventually) periodic in this special case of nilmanifold endomorphisms, leading to the main results of this paper. At the end, we show how these results can be generalized to a bigger class of self-maps.

\section{Affine infra-nilmanifold endomorphisms}

\label{intro}

We start by recalling some basic properties about infra-nilmanifolds and affine infra-nilmanifold endomorphisms. Standard references for nilpotent groups, almost-crystallographic groups and affine infra-nilmanifold endomorphisms are \cite{sega83-1}, \cite{deki96-1} and \cite{deki11-1} respectively.

Let $G$ be a connected and simply connected nilpotent Lie group and $\Aut(G)$ the group of continuous automorphisms of the Lie group $G$. Consider the affine group $\Aff(G)$ of $G$, which is the semidirect product $G \rtimes \Aut(G)$. The natural action of the group $\Aff(G)$ on $G$ is given by 
\begin{eqnarray*}
\forall \alpha = (g, \delta) \in \Aff(G), \hspace{1mm} \forall h \in G:  {}^\alpha h = g \delta(h).
\end{eqnarray*}
Fix a finite group $F \le \Aut(G)$ of automorphisms of $G$. An almost-crystallographic group $\Gamma$ is a discrete subgroup of $G \rtimes F \le \Aff(G)$ such that the quotient space $\Gamma \backslash G$ is compact. If $\Gamma$ is moreover torsion-free then we call $\Gamma$ an almost-Bieberbach group. In this latter case, the quotient space $\Gamma \backslash G$ is a closed manifold which we call an \textbf{infra-nilmanifold} modeled on the Lie group $G$. 

The group $G$ is always identified with the subgroup of pure translations in $\Aff(G)$. If $\Gamma \le G$ or equivalently if $\Gamma$ is a lattice of the Lie group $G$, the manifold $\Gamma \backslash G$ is called a nilmanifold. Note that in this case $\Gamma$ is a torsion-free, finitely generated nilpotent group and a group satisfying these properties is called an $\mathcal{F}$-group. Every $\mathcal{F}$-group occurs as a lattice of a simply connected and connected nilpotent Lie group, so these three poperties determine the fundamental groups of nilmanifolds completely. If $\Gamma \le G$ is a lattice and $\varphi: \Gamma \to \Gamma$ a group morphism, then there exists a unique extension of $\varphi$ to the Lie group $G$. 

Let $p: \Gamma \to F$ be the natural projection on the second component, then we call $p(\Gamma)$ the holonomy group of the infra-nilmanifold $\Gamma \backslash G$. We will always assume that $p$ is surjective or thus that $F$ is equal to the holonomy group of $\Gamma \backslash G$. By construction every almost-Bieberbach group $\Gamma$ fits in the short exact sequence $$\xymatrix{ 1 \ar[r] & N \ar[r] & \Gamma \ar[r] & F \ar[r] &1}$$ where $N = G \cap \Gamma$ is the subgroup of pure translation of $\Gamma$. The normal subgroup $N$ is the maximal normal nilpotent subgroup and thus equal to the Fitting subgroup of $\Gamma$. In particular, every infra-nilmanifold $\Gamma \backslash G$ is finitely covered by the nilmanifold $N \backslash G$. The standard examples of infra-nilmanifolds start from an abelian Lie group $G = \R^n$ and the manifolds we obtain in this way are exactly the flat manifolds where nilmanifolds correspond to the flat tori $\Z^n \backslash \R^n$. 

Let $\alpha \in \Aff(G)$ be an affine transformation such that $\alpha \Gamma \alpha^{-1} \le \Gamma$. Then $\alpha$ induces a map $\bar{\alpha}: \Gamma \backslash G \to \Gamma \backslash G$ given by $$ \bar{\alpha}(\Gamma g) = \Gamma {}^\alpha g.$$ This map is indeed well-defined, since for every $\gamma \in \Gamma$ we have that $${}^\alpha \left( {}^\gamma g \right) = {}^{\alpha \gamma} g = {}^{ \tilde{\gamma} \alpha} g = {}^{\tilde{\gamma}} \left( {}^\alpha g \right)$$ with $\tilde{\gamma} = \alpha \gamma \alpha^{-1} \in \Gamma$. The maps induced in this way are the self-maps we study in this paper.

\begin{Def}
Let $\Gamma \backslash G$ be an infra-nilmanifold and $\alpha \in \Aff(G)$ an affine transformation with $\alpha \Gamma \alpha^{-1} \le \Gamma$. The induced map $\bar{\alpha}: \Gamma \backslash G \to \Gamma \backslash G$ is called an \textbf{affine infra-nilmanifold endomorphism}. If $\alpha \in \Aut(G)$, then we call $\bar{\alpha}$ an \textbf{infra-nilmanifold endomorphism}. If $\Gamma \backslash G$ is moreover a nilmanifold, we call the map a \textbf{nilmanifold endomorphism}.
\end{Def}

An interesting question is how the set of periodic points or eventually periodic points of such an affine infra-nilmanifold endomorphism look like. 
\begin{Def}
Let $f: X \to X$ be an endomorphism of a space $X$. 
\begin{itemize}
\item A point $x \in X$ is called a \textbf{fixed} point of $f$ if it satisfies $f(x)= x$.
\item A point $x \in X$ is a \textbf{periodic} point for $f$ if there exists $k >0$ such that $x$ is a fixed point of $f^k$. The subset of all periodic points of $f$ is denoted as $\Per(f) \subseteq X$.
\item A point $x \in X$ is called \textbf{eventually periodic} if $f^k(x)$ is periodic for some $k > 0$ and the set of eventually periodic points is denoted as $\ePer(f)$. 
\end{itemize}  
\end{Def} \noindent Note that $\Per(f) = \Per(f^k)$ and $\ePer(f) = \ePer(f^k)$ for every $k > 0$. In this paper, we give a partial answer to the question how the (eventually) periodic points of an affine infra-nilmanifold endomorphism looks like, which implies already that the periodic points form a dense subset. For these results we still need the notion of rational Mal'cev completion or radicable hull of an $\mathcal{F}$-group.

Assume that $N$ is an $\mathcal{F}$-group and take $G$ a simply connected and connected nilpotent Lie group such that $N \le G$ is a lattice. Consider the Lie algebra $\lie$ corresponding to $G$ with exponential map $\exp: \lie \to G$. In this case, $\exp$ is a diffeomorphism and denote by $\log$ its inverse. The rational subspace spanned by $\log(N) \subset \lie$ is a rational Lie algebra which we denote as $\lien^\Q$. Under the exponential mapping, this rational Lie algebra corresponds to a nilpotent group $N^\Q$ which is radicable, i.e. for every $n \in N^\Q$ and $k > 0$, there exists an $m \in N^\Q$ such that $m^k = n$. The group $N^\Q$ is called the radicable hull of the $\mathcal{F}$-group $N$. In torsion-free nilpotent groups, for every $k > 0$ we have that $x^k = y^k$ if and only if $x = y$. So if $N^\Q$ is a radicable torsion-free nilpotent group, then for every $k > 0$ and $n \in N^\Q$ there exists a unique $m \in N^\Q$ such that $m^k = n$ and we write this as $m = n^{\frac{1}{k}}$. 

Let us end this introduction by recalling some definitions about covering maps and the relation to the fundamental group from \cite{munk75-1}. Let $p: E \to B$ a continuous surjection between connected spaces $E$ and $B$, then we say that an open subset $U \subseteq B$ is evenly covered by $p$ if $p^{-1}(U)$ is a disjoint union of open subsets, each of which is mapped homeomorphically onto the subset $U$ by $p$. If every point of $B$ has an open neighborhood which is evenly covered by $p$, then we call $p$ a covering map. A covering map $p: E \to B$ is called finite if $p^{-1}(b)$ is finite for one (and hence for every) $b \in B$.

We call $f: E \to E$ a lift of the map $g: B \to B$ if the following diagram commutes:
$$ \xymatrix{
E \ar[r]^f \ar[d]^p & E \ar[d]^p \\
B \ar[r]^g 			& B.} $$ 
A covering transformation of $p$ is a homeomorphism $h: E \to E$ such that $p \comp h = p$ or thus a lift of the identity map $\I_B$. The set of all covering transformations forms a group under composition. A lift $f: E \to E$ of the map $g: B \to B$ is not unique, but every other lift of $g$ is of the form $h \comp f$ for $h$ a covering transformation.

We recall how the group $\Gamma$ of covering transformations of the universal cover $p: \bar{B} \to B$ is isomorphic to the fundamental group. Fix a point $b \in B$ and $b_0 \in \bar{B}$ such that $p(b_0) = b$. Take $\gamma \in \Gamma$ and consider a path $f: I \to \bar{B}$ from $b_0$ to ${}^\gamma b_0$. This path projects to a loop $p \comp f$ starting in $b$ and thus to an element of the fundamental group $[p\comp f]_h \in \pi_1(B,b)$. Since $\bar{B}$ is simply connected, every two paths from $b_0$ to ${}^\gamma b_0$ are homotopic. This implies that the element $[p\comp f]_h$ does not depend on the choice of the path $f$. So we have a well-defined map $$\varphi_{b,b_0}: \Gamma \to \pi_1(B,b)$$ and it is a general fact that this map is a group morphism which forms an isomorphism between the group $\Gamma$ and $\pi_1(B,b)$. In this paper, we always identify the fundamental group with the group $\Gamma$ of covering transformations. 

We call the covering map $p: E \to B$ regular if for every $e \in E$ and $e^\prime \in p^{-1}(p(e))$, there exists a covering transformation $h$ such that $h(e) = e^\prime$. This property is equivalent to $p_\ast(\pi_1(E,e))$ being a normal subgroup of $\pi_1(B,p(e))$ for some $e \in E$ and in this case, the group of covering transformations is isomorphic to $\faktor{\pi_1(B,p(e))}{p_\ast\left(\pi_1(E,e)\right)}$. 

Let $g: B \to B$ be a continuous map and consider a lift $f: \bar{B} \to \bar{B}$. Note that for every $\gamma \in \Gamma$, the map $f \comp \gamma$ is also a lift of $g$ and therefore there exists a unique $f^{\#}(\gamma) \in \Gamma$ such that $f \comp \gamma = f^\#(\gamma) \comp f$. The map $f^\#: \Gamma \to \Gamma$ is a group morphism which makes the following diagram commute:
 $$ \xymatrix{
\Gamma \ar[r]^{f^\#} \ar[d]^{\varphi_{b,b_0}} & \Gamma \ar[d]^{\varphi_{g(b),f(b_0)}} \\
\pi_1(B,b) \ar[r]^{g_\ast} 			& \pi_1(B,g(b))} $$
for every $b \in B$ and $b_0 \in \bar{B}$ such that $p(b_0) = b$. Therefore, under the identification between $\Gamma$ and the fundamental group, the group morphism $f^{\#}$ corresponds to the induced map $g_\ast$ on the fundamental group. The advantage of working in this way is that the map $f^{\#}$ does not depend on the choice of a base point and we will omit the notation of the base point while using this identification. The map $f^{\#}$ does depend on the choice of lift of $g$, but in the cases we consider in this paper, there will be a canonical choice for the lift $f$. If $f$ is a diffeomorphism, then $f^\#(\gamma) = f \comp \gamma \comp f^{-1}$.

\section{Reduction to nilmanifold endomorphisms}
\label{reduction2}
In this section, we show how studying (eventually) periodic points of affine infra-nilmanifold endomorphisms can be reduced to the more restrictive case of nilmanifold endomorphisms. These type of maps are induced by automorphisms on nilmanifolds and are therefore easier to handle. Their (eventually) peroidic points are then studied in more detail in the following sections.

\smallskip

We start with the following lemma which, although it is very elementary, forms an important ingredient for the main results of this paper.
\begin{Lem}
\label{elem}
Let $f: S \to S$ be a map on a finite set $S$. Then we have $\Per(f) \neq \emptyset$ and $\ePer(f) = S$. If $f$ is moreover injective then $\Per(f) = S$.
\end{Lem}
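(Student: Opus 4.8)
The plan is to prove this purely combinatorially, using the finiteness of $S$ and the pigeonhole principle.
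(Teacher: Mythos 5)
Your intended approach is the right one, and it coincides with the paper's own argument: for any $s \in S$ the orbit $\{f^k(s) \mid k > 0\}$ lies in the finite set $S$, so by pigeonhole there exist $k_1 < k_2$ with $f^{k_1}(s) = f^{k_2}(s)$; then $f^{k_1}(s)$ is fixed by $f^{k_2-k_1}$, giving $\Per(f) \neq \emptyset$, and since $f^{k_1}(s)$ is periodic, $s$ itself is eventually periodic, giving $\ePer(f) = S$.

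However, as submitted this is a plan, not a proof, and the one part of the lemma that needs something beyond bare pigeonhole is left unaddressed: the claim $\Per(f) = S$ when $f$ is injective. Pigeonhole by itself only yields eventual periodicity of every point. To upgrade this you must actually use injectivity, in one of two standard ways: either cancel, i.e.\ from $f^{k_1}(s) = f^{k_2}(s)$ with $k_1 < k_2$ deduce $s = f^{k_2-k_1}(s)$ by applying injectivity $k_1$ times, so $s$ itself is periodic; or, as the paper does, observe that an injective self-map of a finite set is a permutation, hence has finite order, so some power of $f$ is the identity on $S$ and every point is periodic. Until you write out the pigeonhole step and one of these two injectivity arguments, the proposal names the correct strategy but does not yet constitute a proof.
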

\begin{proof}
Take any $s \in S$ and consider its orbit $$\left\{ f^k(s) \suchthat k > 0\right\}\subseteq S.$$ Since $S$ is finite, there exists $k_1 < k_2$ such that $f^{k_1}(s) = f^{k_2}(s)$. This implies that $f^{k_1}(s)$ is a periodic point of $f$ and thus $\Per(f) \neq \emptyset$. The element $s \in \ePer(f)$ as well and since $s$ was chosen arbitrary, this implies $\ePer(f) = S$. If $f$ is injective, then $f$ forms a permutation of the finite set $S$, hence $f$ has finite order and $\Per(f) = S$.
\end{proof}
To main idea of the paper is to construct finite subsets of the infra-nilmanifold which are preserved by the affine infra-nilmanifold endomorphism. In this way, Lemma \ref{elem} helps us constructing (eventually) periodic points. The following proposition shows that Lemma \ref{elem} allows us to study the behavior of (eventually) periodic points under finite covering maps as well.
\begin{Prop}
\label{cover}
Let $p: E \to B$ a finite covering map and consider maps $f: E \to E$ and $g: B \to B$ such that $f$ is a lift of $g$. Then the following statements hold:
\begin{enumerate}[$(1)$]
\item $p^{-1}\left(\ePer\left(g\right)\right) = \ePer(f)$;
\item $p(\Per(f)) = \Per(g)$;
\item If $E$ is a regular covering map and $$g_\ast: \faktor{\pi_1(B)}{\pi_1(E)} \to \faktor{\pi_1(B)}{\pi_1(E)}$$ is injective, then $p^{-1}(\Per(g)) = \Per(f)$.
\end{enumerate}
\end{Prop}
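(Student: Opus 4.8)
The unifying observation I would start from is that, since $f$ is a lift of $g$, one has $p \comp f^j = g^j \comp p$ for every $j \geq 0$ (by induction on $j$). In particular $f$ carries the fiber $p^{-1}(b)$ into the fiber $p^{-1}(g(b))$, and every fiber is finite because $p$ is a finite covering map. Hence any iterate $f^j$ restricts to a map between finite unions of fibers, which is exactly the setting of Lemma \ref{elem}. I expect statements $(1)$ and $(2)$ to follow almost directly from this remark, with $(3)$ requiring the two extra hypotheses.

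For $(2)$ the inclusion $p(\Per(f)) \subseteq \Per(g)$ is immediate, since $f^k(e) = e$ forces $g^k(p(e)) = p(e)$. For the reverse inclusion, given $b \in \Per(g)$ with $g^k(b) = b$, the map $f^k$ restricts to a self-map of the finite set $p^{-1}(b)$, and Lemma \ref{elem} yields a point $e \in p^{-1}(b)$ that is periodic for $f^k$, hence for $f$, with $p(e) = b$. For $(1)$ the inclusion $\ePer(f) \subseteq p^{-1}(\ePer(g))$ is again immediate from $p \comp f^j = g^j \comp p$. Conversely, if $p(e) = b \in \ePer(g)$, the forward orbit $O_b = \{\, g^j(b) \suchthat j \geq 0 \,\}$ is finite and satisfies $g(O_b) \subseteq O_b$, so $f$ restricts to a self-map of the finite set $p^{-1}(O_b) \ni e$; Lemma \ref{elem} then gives $\ePer(f|_{p^{-1}(O_b)}) = p^{-1}(O_b)$, so that $e \in \ePer(f)$.

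The substance lies in $(3)$. The inclusion $\Per(f) \subseteq p^{-1}(\Per(g))$ holds as in $(2)$, so I must prove $p^{-1}(\Per(g)) \subseteq \Per(f)$. Fix $b \in \Per(g)$ with $g^k(b) = b$; it suffices to show that $f^k$ restricts to a \emph{bijection} of the finite fiber $p^{-1}(b)$, for then Lemma \ref{elem} forces every point of $p^{-1}(b)$ to be periodic. Here I would exploit regularity: the covering transformation group $Q \cong \faktor{\pi_1(B)}{p_\ast(\pi_1(E))}$ is finite and acts freely and transitively on $p^{-1}(b)$, so fixing a base point $e_0 \in p^{-1}(b)$ identifies the fiber with $Q$ via $h \mapsto h(e_0)$. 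As in the universal-cover discussion, I define the homomorphism $f_\#: Q \to Q$ by $f \comp h = f_\#(h) \comp f$; under the isomorphism $Q \cong \faktor{\pi_1(B)}{p_\ast(\pi_1(E))}$ it corresponds to the induced map $g_\ast$ on the quotient, so the hypothesis says precisely that $f_\#$ is injective. Iterating the defining relation gives $f^k \comp h = f_\#^{\,k}(h) \comp f^k$, and writing $f^k(e_0) = h_0(e_0)$ with $h_0 \in Q$ we obtain $f^k(h(e_0)) = \bigl( f_\#^{\,k}(h)\, h_0 \bigr)(e_0)$. Since $Q$ is finite and $f_\#$ injective, $f_\#$ is an automorphism, so $h \mapsto f_\#^{\,k}(h)\, h_0$ is a bijection of $Q$; equivalently, $f^k$ permutes $p^{-1}(b)$, as needed.

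The main obstacle is exactly this last step. Mapping a fiber into itself does not by itself rule out collapsing of points, and both hypotheses are needed to exclude it: regularity presents each fiber as a $Q$-torsor, and the injectivity of $g_\ast$ on $\faktor{\pi_1(B)}{\pi_1(E)}$ guarantees that $f_\#$, and therefore $f^k$ on the fiber, is a bijection rather than merely a self-map. Translating the abstract hypothesis on $g_\ast$ into the concrete action of $f$ on the fibers, via the homomorphism $f_\#$, is the crux of the argument; the rest is bookkeeping with Lemma \ref{elem}.
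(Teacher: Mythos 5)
Your proof is correct and follows essentially the same route as the paper: every part reduces to Lemma \ref{elem} on finite unions of fibers, and in $(3)$ you use regularity to present the fiber as a torsor over the finite deck group and translate the hypothesis on $g_\ast$ into injectivity of the induced homomorphism $f_\#$, which is exactly the paper's argument with $\varphi = (f^k)^{\#}$. The only cosmetic difference is in $(1)$, where you apply Lemma \ref{elem} to $f$ on the invariant set $p^{-1}(O_b)$ instead of to $f^k$ on the single fiber $p^{-1}(g^n(b))$ as the paper does; both versions are equally valid.
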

\begin{proof}
For the first statement, the inclusion $\ePer(f) \subseteq p^{-1}(\ePer(g))$ follows immediately from the fact that $f$ is a lift of the map $g$. For the other inclusion, take any $b \in \ePer(g)$, then there exists $n,k >0$ such that $g^{n+k}(b) = g^{n}(b)$. For every $e \in p^{-1}(g^n(b))$, we have that $$p(f^k(e)) = g^k(p(e)) = g^k(g^n(b)) = g^n(b) = p(e),$$ and thus $f^k$ preserves the finite set $p^{-1}(g^n(b))$. By Lemma \ref{elem} we conclude that every point of $p^{-1}(g^n(b))$ is eventually periodic for the map $f^k$ and thus also for the map $f$. Since every $e \in p^{-1}(b)$ satisfies $f^n(e) \in p^{-1}(g^n(b))$, this implies that $p^{-1}(b) \subseteq \ePer(f)$.

The inclusion $p(\Per(f)) \subseteq \Per(g)$ of the second statement follows again from the definition of lift of a map. For the other inclusion, take $b \in \Per(g)$ and $k >0$ such that $g^k(b) = b$. Similarly as above, $f^k$ preserves the finite set $p^{-1}(b)$ and by Lemma \ref{elem}, we conclude that there exists $e \in p^{-1}(b)$ such that $e \in \Per(f^k) = \Per(f)$ or thus that $\Per(g) \subseteq p(\Per(f))$. 

For the last statement, we only have to show that $f^k$ is injective on $p^{-1}(b)$ in the previous argument. Denote by $H$ the finite group of deck transformations of $p$ which is isomorphic to the group  $\faktor{\pi_1(B)}{\pi_1(E)}$ as explained in Section \ref{intro}. Under this isomorphism, the morphism $g_\ast$ corresponds to the morphism $f^\#$ and thus $f^k$ induces an injective group morphism $$\varphi := \left(f^k\right)^\#: H \to H.$$ Fix an element $e_0 \in p^{-1}(b)$, then every other element $e \in p^{-1}(b)$ is uniquely represented as $e = h( e_0)$ for some $h \in H$. From the definition of $f^\#$, see Section \ref{intro}, it follows that $$f^k(e) = f^k(h(e_0)) = \varphi(h) \left(f^k(e_0)\right).$$ This implies that $f^k$ is injective on the set $p^{-1}(b)$ and thus every $e \in p^{-1}(b)$ is a periodic point of $f^k$ and therefore also of $f$. \end{proof}

There is an important difference between statements $(1)$ and $(2)$ of Proposition \ref{cover}, which we illustrate with the following example.

\begin{Ex}
\label{expand}
Consider the torus $B = \Z^n \backslash \R^n$ and take integers $k_1, \ldots, k_n$ with $\vert \prod_{i=1}^n k_i \vert > 1$. Let $\delta$ be the linear map $\R^n \to \R^n$ induced by the matrix $$\begin{pmatrix} k_1 & 0 & \hdots & 0\\0 & k_2 & \hdots & 0\\ \vdots & \vdots & \ddots & \vdots \\ 0 & 0 & \hdots & k_n \end{pmatrix} \in \GL(n,\Q).$$ Consider the lattice $L = \delta(\Z^n) \le \Z^n$ of $\R^n$ and denote the torus $L \backslash \R^n$ as $E$. Since $L$ is a finite index normal subgroup of $\Z^n$, $E$ is a regular finite cover of $B$ and take $p: E \to B$ the covering map. It holds that $\delta(\Z^n) \le \Z^n$ and $\delta(L) \le \delta(\Z^n) = L$ and thus $\delta$ induces a nilmanifold endomorphism on both nilmanifolds $B$ and $E$. We denote these induced nilmanifold endomorphisms as $f: E \to E$ and $g: B \to B$. Note that $f$ is indeed a lift of the map $g$. 

Denote by $q$ the universal covering map $\R^n \to E$, then $p \comp q$ is the universal covering map for $B$. Every point of $X = q(\Z^n) \subseteq E$ is an eventually periodic point of $f$, since $$f^m(q(z)) = q (\delta^m(z)) \in q(L) = \{e L\}$$ for every $m > 0$ and $z \in \Z^n$. But only the point $q(0) \in X$ is a periodic point of the map $f$. The set $X$ has $\vert \det(\delta) \vert = \vert \prod_{i=1}^n k_i \vert > 1$ elements and so not every point in $X \subseteq p^{-1}(\Per(g)) $ is periodic, although $p(\Per(f)) = \Per(g)$.
\end{Ex}

By applying this result to affine infra-nilmanifold endomorphisms we get the following consequence of Proposition \ref{cover}.

\begin{Thm}
\label{reduc}
Let $\bar{\alpha}$ be an affine infra-nilmanifold endomorphism on the infra-nilmanifold $\Gamma \backslash G$ induced by $\alpha \in \Aff(G)$. Let $N \triangleleft \Gamma$ be the Fitting subgroup of $\Gamma$ and consider the natural covering map $p: N \backslash G \to \Gamma \backslash G$. Then $\alpha$ induces an affine nilmanifold endomorphism $\tilde{\alpha}$ on $N \backslash G$ which is the lift of $\bar{\alpha}$ and the (eventually) periodic points satisfy 
\begin{align*}
p^{-1}\left(\ePer(\bar{\alpha})\right)& =\ePer(\tilde{\alpha})\\
p^{-1}\left(\Per(\bar{\alpha})\right)& =\Per(\tilde{\alpha}).
\end{align*}
\end{Thm}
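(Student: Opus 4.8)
The plan is to apply Proposition~\ref{cover} to the covering map $p\colon N\backslash G\to\Gamma\backslash G$ with $f=\tilde{\alpha}$ and $g=\bar{\alpha}$. Before I can invoke that proposition, I must check its hypotheses in the present setting. The first preliminary task is to produce the lift $\tilde{\alpha}$ itself: starting from $\alpha=(g_0,\delta)\in\Aff(G)$ satisfying $\alpha\Gamma\alpha^{-1}\le\Gamma$, I would verify that $\alpha$ also satisfies $\alpha N\alpha^{-1}\le N$, so that $\alpha$ descends to a well-defined affine nilmanifold endomorphism $\tilde{\alpha}$ on $N\backslash G$. This uses that $N=G\cap\Gamma$ is characteristic in $\Gamma$ (it is the Fitting subgroup, as recalled in the excerpt), hence preserved by the injective endomorphism $\gamma\mapsto\alpha\gamma\alpha^{-1}$ of $\Gamma$; since $N$ is exactly the pure translations, $\alpha N\alpha^{-1}=\delta(N)\le N$ follows from comparing the translation parts. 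That $\tilde{\alpha}$ is a lift of $\bar{\alpha}$ is immediate from the commutativity of the natural projections $G\to N\backslash G\to\Gamma\backslash G$ with the action of $\alpha$.

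Once the lift is in place, the statement $p^{-1}(\ePer(\bar{\alpha}))=\ePer(\tilde{\alpha})$ is exactly conclusion~$(1)$ of Proposition~\ref{cover}, which requires only that $p$ be a finite covering map and $\tilde{\alpha}$ a lift of $\bar{\alpha}$; both hold, since $\Gamma\backslash G$ is finitely covered by $N\backslash G$ (the index $[\Gamma:N]=\lvert F\rvert$ is finite). So the eventually periodic equality needs no extra work beyond the setup.

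The equality $p^{-1}(\Per(\bar{\alpha}))=\Per(\tilde{\alpha})$ is the more delicate one, and here I would invoke conclusion~$(3)$ of Proposition~\ref{cover} rather than~$(2)$: Example~\ref{expand} shows precisely that $(2)$ alone only gives $p(\Per(\tilde{\alpha}))=\Per(\bar{\alpha})$, not the full preimage. To get $p^{-1}$, I must verify the two hypotheses of~$(3)$: that $p$ is a regular covering, and that the induced map on $\faktor{\pi_1(\Gamma\backslash G)}{\pi_1(N\backslash G)}\cong\faktor{\Gamma}{N}\cong F$ is injective. Regularity is standard, since $N\triangleleft\Gamma$ is normal, so the covering $N\backslash G\to\Gamma\backslash G$ corresponds to a normal subgroup of the fundamental group. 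The injectivity of the induced map on $F$ is where the real content lies: under the identification of Section~\ref{intro}, $\bar{\alpha}_\ast$ corresponds to the endomorphism $\tilde{\alpha}^{\#}$, which in turn is conjugation by $\alpha$ followed by the quotient, i.e.\ the map $\bar{\gamma}\mapsto\overline{\alpha\gamma\alpha^{-1}}$ on $\faktor{\Gamma}{N}$.

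The main obstacle is therefore proving that the endomorphism $\psi\colon\faktor{\Gamma}{N}\to\faktor{\Gamma}{N}$ induced by conjugation by $\alpha$ is injective. Since $\faktor{\Gamma}{N}\cong F$ is a finite group, an endomorphism of it is injective if and only if it is surjective if and only if it is an automorphism; so it suffices to show $\psi$ is either. I would argue this by examining the automorphism part $\delta\in\Aut(G)$ of $\alpha$: conjugation $\gamma\mapsto\alpha\gamma\alpha^{-1}$ acts on the holonomy component $F\le\Aut(G)$ essentially by conjugation by $\delta$ inside $\Aut(G)$, namely $A\mapsto\delta A\delta^{-1}$ for $A\in F$, which is visibly injective on $F$ and hence bijective as a self-map of the finite group. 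I would need to track carefully that the holonomy part of $\alpha\gamma\alpha^{-1}$ is indeed $\delta\,p(\gamma)\,\delta^{-1}$ modulo the translations absorbed into $N$, so that $\psi$ descends from this conjugation; the bookkeeping of translation parts in the semidirect product $G\rtimes F$ is the one routine-but-necessary calculation, after which injectivity---and with it the full conclusion via Proposition~\ref{cover}$(3)$---follows.
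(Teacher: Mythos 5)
Your proposal is correct and takes essentially the same route as the paper: show $\alpha N\alpha^{-1}\subseteq \Gamma\cap G=N$ so that $\alpha$ descends to a lift $\tilde{\alpha}$, then apply Proposition \ref{cover} after checking that the induced map on $\faktor{\Gamma}{N}\cong F$ is $f\mapsto\delta f\delta^{-1}$, hence injective. Two small corrections: $\alpha N\alpha^{-1}=g\delta(N)g^{-1}$ rather than $\delta(N)$, and the appeal to $N$ being characteristic does not quite apply (conjugation by $\alpha$ is only an injective endomorphism of $\Gamma$, not an automorphism), but your fallback argument via translation parts --- the conjugate of a translation is a translation lying in $\Gamma$, hence in $\Gamma\cap G=N$ --- is exactly the paper's argument and closes the gap.
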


\begin{proof}
We identify the fundamental group in a point $\Gamma g_0$ with the group $\Gamma$ of deck transformations as in Section \ref{intro}. Since $\alpha: G \to G$ is a diffeomorphism, we know that $\alpha_\ast(\gamma) = \alpha \gamma \alpha^{-1}$ for all $\gamma \in \Gamma$. If we write $\alpha = (g,\delta)$, then $$\alpha N \alpha^{-1} = g \delta(N) g^{-1}  \subseteq \Gamma \cap G = N$$ and thus $\alpha$ induces an affine nilmanifold endomorphism on the finite covering space. The holonomy group $\faktor{\Gamma}{N} = F$ is isomorphic to a subgroup of $\Aut(G)$ in a natural way. The group morphism induced by $\alpha_\ast$ on $F$ is given by $\delta f \delta^{-1}$ for all $f \in F$ under this isomorphism. Therefore this map is injective and thus the statement of the theorem follows from Proposition \ref{cover}.
\end{proof}

\noindent This theorem reduces the study of periodic points of affine infra-nilmanifold endomorphisms to the study of affine nilmanifold endomorphisms. Since we are interested in the density of (eventually) periodic points, the following lemma is important.

\begin{Lem}
\label{density}
Let $p: E \to B$ a covering map and consider subsets $X \subseteq E$ and $Y \subseteq B$. Then the following statements are true.
\begin{enumerate}[$(1)$]
\item If $X$ is dense in $E$, then $p(X)$ is dense in $B$.
\item $Y$ is dense in $B$ if and only if $p^{-1}(Y)$ is dense in $E$.
\end{enumerate}
\end{Lem}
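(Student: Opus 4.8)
The plan is to use only three properties of the covering map $p$: it is continuous, it is surjective (both built into the definition of covering map in Section \ref{intro}), and it is an open map. Only the last of these deserves a remark: since every point of $B$ has a neighborhood $U$ that is evenly covered, each of the disjoint sheets comprising $p^{-1}(U)$ is mapped homeomorphically onto $U$, so $p$ is locally a homeomorphism and therefore sends open sets to open sets. With these three facts in hand the lemma becomes entirely formal, and the backward direction of $(2)$ will reduce to $(1)$.

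For $(1)$, I would verify the defining property of density directly. Suppose $X$ is dense in $E$ and let $U \subseteq B$ be an arbitrary nonempty open set. By continuity $p^{-1}(U)$ is open, and by surjectivity it is nonempty, so density of $X$ supplies a point $x \in X \cap p^{-1}(U)$. Then $p(x) \in p(X) \cap U$, which shows that $p(X)$ meets every nonempty open subset of $B$ and is hence dense. Note that this step uses only continuity and surjectivity, not openness.

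For the forward direction of $(2)$ I would use openness in the dual way. Assume $Y$ is dense in $B$ and let $V \subseteq E$ be a nonempty open set. Since $p$ is open, $p(V)$ is a nonempty open subset of $B$, so density of $Y$ yields a point $y \in Y \cap p(V)$. Choosing $v \in V$ with $p(v) = y$ gives $v \in p^{-1}(Y) \cap V$, so $p^{-1}(Y)$ meets every nonempty open subset of $E$ and is therefore dense. For the backward direction I would simply observe that surjectivity of $p$ forces $p\bigl(p^{-1}(Y)\bigr) = Y$, and then apply $(1)$ to the dense set $X = p^{-1}(Y)$ to conclude that $Y = p\bigl(p^{-1}(Y)\bigr)$ is dense in $B$.

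There is no genuine obstacle here: the only non-formal input is the fact that covering maps are open, and once that is recorded both assertions follow from the elementary characterization that a subset is dense exactly when it meets every nonempty open set. The one point requiring a little care is the reduction in the backward direction of $(2)$, where surjectivity of $p$ must be invoked to guarantee the equality $p\bigl(p^{-1}(Y)\bigr) = Y$ rather than merely the inclusion $p\bigl(p^{-1}(Y)\bigr) \subseteq Y$.
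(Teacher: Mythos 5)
Your proof is correct and follows essentially the same route as the paper: statement $(1)$ from continuity and surjectivity of $p$, the forward direction of $(2)$ by pushing an open set $V \subseteq E$ down to the open set $p(V)$ (using that a covering map is a local homeomorphism, hence open), and the backward direction by reducing to $(1)$ via $p\bigl(p^{-1}(Y)\bigr) = Y$. You simply write out details the paper leaves implicit, so there is nothing further to compare.
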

\noindent Note that it is not true that $X$ is dense in $E$ if $p(X)$ is dense in $B$. For example, if $p: \R \to \Z \backslash \R$ is the natural covering map, then $p([0,1])$ is dense in $\Z \backslash \R$ but $[0,1]$ is not dense in $\R$.

\begin{proof}
The first statement is true since $p$ is a continuous surjective map. For the second statement, take an open set $U \subseteq E$ and consider the subset $p(U) \subseteq B$ which is also open since $p$ is a local homeomorphism. Since $Y$ is dense, there exists $y \in Y \cap p(U)$ and take $u \in U$ such that $p(u) = y$. Because $u \in p^{-1}(Y) \cap U$, this finishes the proof.
\end{proof} 

Not every affine infra-nilmanifold endomorphism has periodic points, as we can see from the following example.

\begin{Ex}
\label{noperiod}
Consider the torus $\Z^n \backslash \R^n$ and the affine infra-nilmanifold endomorphism induced by the affine map $\alpha = (b, I_n) \in \Aff(\R^n)$. Note that $$\bar{\alpha}^k (\Z^n + x) = \Z^n + x + k b$$ and thus $\Z^n + x$ is a periodic point if and only if $k b \in \Z^n$ for some $k > 0$. This implies that the map $\bar{\alpha}$ has periodic points if and only if $b \in \Q^n$. By taking $b \in \R^n \setminus \Q^n$, this gives us a class of affine infra-nilmanifold endomorphisms without periodic points.
\end{Ex}

Example \ref{noperiod} shows that there are affine infra-nilmanifold endomorphisms which have no periodic points at all. For studying periodic points, the following definition is important.

\begin{Def}
Let $f: M \to M$ and $g: N \to N$ be maps on the manifolds $M$ and $N$. We call $f$ and $g$ \textbf{topologically conjugate} if there exists a homeomorphism $h: M \to N$ which makes the following diagram commutative:
$$ \xymatrix{
M \ar[r]^h \ar[d]^f & N \ar[d]^g \\
M \ar[r]^h 			& N.} $$ 
The homeomorphism $h$ is called a \textbf{topologically conjugation} between $f$ and $g$. 
\end{Def}

Note that topological conjugacy is an equivalence relation on all self-maps of manifolds. If two maps are topologically conjugate, there is an immediate relation between the sets of (eventually) periodic points.

\begin{Prop}
\label{basictopcon}
Let $M$ and $N$ be manifolds and assume that $h: M \to N$ is a topologically conjugacy between maps $f: M \to M$ and $g: N \to N$. Then $h(\Fix(f)) = \Fix(g)$, $h(\Per(f)) = \Per(g)$ and $h(\ePer(f)) = h(\ePer(g))$.
\end{Prop}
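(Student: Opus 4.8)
The plan is to prove the three set equalities directly from the commutativity of the conjugacy diagram, observing that the single identity $h \comp f = g \comp h$ already contains everything needed. First I would record the iterated version of this identity, namely $h \comp f^k = g^k \comp h$ for every $k > 0$, which follows by a trivial induction: assuming $h \comp f^{k-1} = g^{k-1} \comp h$, one computes $h \comp f^k = (h \comp f) \comp f^{k-1} = (g \comp h) \comp f^{k-1} = g \comp (h \comp f^{k-1}) = g \comp g^{k-1} \comp h = g^k \comp h$. This single relation, together with the fact that $h$ is a bijection, will do all the work.

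For the fixed points, I would show $h(\Fix(f)) \subseteq \Fix(g)$ by taking $x \in \Fix(f)$ and computing $g(h(x)) = h(f(x)) = h(x)$, so $h(x) \in \Fix(g)$; the reverse inclusion follows by applying the same argument to the inverse conjugacy $h^{-1}$, which conjugates $g$ to $f$ because $h^{-1} \comp g = f \comp h^{-1}$ (obtained by composing the diagram identity with $h^{-1}$ on both sides). Hence $h(\Fix(f)) = \Fix(g)$. For the periodic points I would argue analogously but using the iterated identity: if $x \in \Per(f)$ with $f^k(x) = x$, then $g^k(h(x)) = h(f^k(x)) = h(x)$, so $h(x)$ is periodic of the same period, giving $h(\Per(f)) \subseteq \Per(g)$, and again the symmetric argument with $h^{-1}$ yields equality. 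The eventually periodic case is the same pattern: if $f^k(x) \in \Per(f)$ then $h(f^k(x)) = g^k(h(x))$ lies in $h(\Per(f)) = \Per(g)$, so $h(x) \in \ePer(g)$, and symmetry closes the argument.

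There is no real obstacle here; the statement is essentially a bookkeeping exercise, and the only thing to be careful about is the systematic use of the symmetry $h \leftrightarrow h^{-1}$ to get both inclusions rather than proving each direction from scratch. I would state once, near the start, that since $h$ is a homeomorphism the map $h^{-1}$ is a topological conjugacy from $g$ to $f$, and then invoke this whenever the reverse inclusion is needed. One minor point worth flagging is that the final claim in the proposition is written as $h(\ePer(f)) = h(\ePer(g))$, which appears to be a typo for $h(\ePer(f)) = \ePer(g)$; I would prove the latter, since that is the content that matches the other two equalities and follows from the same argument.
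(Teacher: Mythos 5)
Your proof is correct and is exactly the standard bookkeeping argument the paper has in mind; the paper simply omits the proof, remarking that it is immediate, so there is nothing to compare beyond noting that your use of the iterated identity $h \comp f^k = g^k \comp h$ and the inverse conjugacy $h^{-1}$ is the intended argument. Your observation that the third equality as printed, $h(\ePer(f)) = h(\ePer(g))$, is a typo for $h(\ePer(f)) = \ePer(g)$ is also correct, and proving the latter is the right call.
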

\noindent This proof is immediate and therefore we omit it.

\smallskip

In Example \ref{noperiod} we showed that there exist affine infra-nilmanifold endomorphisms which have no periodic points at all. If an affine nilmanifold endomorphism has a periodic point, we can further reduce the problem to nilmanifold endomorphisms by the following result.

\begin{Thm}
\label{topcon}
Let $\bar{\alpha}: \Gamma \backslash G \to \Gamma \backslash G$ be an affine infra-nilmanifold endomorphism induced by the affine transformation $\alpha \in \Aff(G)$. Then $\bar{\alpha}$ is topologically conjugate to an infra-nilmanifold endomorphism if and only if $\bar{\alpha}$ has a fixed point.
\end{Thm}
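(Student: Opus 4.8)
The plan is to prove both implications, with the substance lying entirely in the direction ``fixed point $\Rightarrow$ topologically conjugate to an infra-nilmanifold endomorphism''.

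For the easy direction, suppose $\bar{\alpha}$ is topologically conjugate, via a homeomorphism $h$, to an infra-nilmanifold endomorphism $\bar{\beta}$ induced by some $\beta \in \Aut(H)$ on an infra-nilmanifold $\Lambda \backslash H$. Since $\beta$ is an automorphism it fixes the identity $e \in H$, so $\Lambda e$ is a fixed point of $\bar{\beta}$ and hence $\Fix(\bar{\beta}) \neq \emptyset$. Proposition \ref{basictopcon} gives $h(\Fix(\bar{\alpha})) = \Fix(\bar{\beta})$, and therefore $\Fix(\bar{\alpha}) \neq \emptyset$ as well.

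For the converse, write $\alpha = (g, \delta)$ and suppose $\Gamma g_0$ is a fixed point, that is $\Gamma\, {}^\alpha g_0 = \Gamma g_0$. This means precisely that there exists $\gamma \in \Gamma$ with ${}^\alpha g_0 = {}^\gamma g_0$. The first key observation is that the affine transformation $\mu := \gamma^{-1}\alpha$ induces \emph{the same} map $\bar{\alpha}$ on $\Gamma \backslash G$, because $\gamma^{-1} \in \Gamma$, while genuinely fixing $g_0$ in $G$, since ${}^\mu g_0 = {}^{\gamma^{-1}}({}^\alpha g_0) = {}^{\gamma^{-1}}({}^\gamma g_0) = g_0$. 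One should also note that $\mu \Gamma \mu^{-1} \le \Gamma$, which is immediate from $\alpha \Gamma \alpha^{-1} \le \Gamma$ together with $\gamma \in \Gamma$.

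The second key step is to straighten the fixed point to the origin by conjugating with the pure translation $\tau := (g_0, \mathrm{id}_G) \in \Aff(G)$. A direct computation in $G \rtimes \Aut(G)$ shows that $\tau^{-1}\mu\tau$ has trivial translation part precisely because $\mu$ fixes $g_0$, so $\tau^{-1}\mu\tau = \delta' \in \Aut(G)$ is an honest automorphism. Setting $\Gamma' := \tau^{-1}\Gamma\tau$, conjugation by $\tau$ preserves the rotational parts of elements, so $\Gamma' \le G \rtimes F$ is again an almost-Bieberbach group with holonomy $F$, isomorphic to $\Gamma$; hence $\Gamma' \backslash G$ is an infra-nilmanifold and $\bar{\delta'}$ is an infra-nilmanifold endomorphism on it. Finally I would check that $\tau$ descends to a homeomorphism $h : \Gamma' \backslash G \to \Gamma \backslash G$, $\Gamma' x \mapsto \Gamma\, {}^\tau x$, and that the relation $\tau \delta' = \mu \tau$ makes the conjugacy diagram commute, so that $h \circ \bar{\delta'} = \bar{\mu} \circ h = \bar{\alpha} \circ h$. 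The main obstacle is bookkeeping rather than conceptual: one must carefully track the semidirect-product multiplication through each conjugation and verify that $\Gamma'$ retains every defining property of an almost-Bieberbach group (discreteness, cocompactness, torsion-freeness, and holonomy $F$), all of which follow because $\tau \in G \rtimes F$ induces a diffeomorphism of $G$ and $\Gamma' \cong \Gamma$.
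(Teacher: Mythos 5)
Your proof is correct, and its skeleton is the same as the paper's: conjugate by the pure translation $\tau=(g_0,\mathrm{id}_G)$ so that the fixed point sits at the identity coset, and check that what remains is induced by an automorphism. The genuinely different ingredient is your preliminary replacement of $\alpha$ by $\mu=\gamma^{-1}\alpha$, where $\gamma=(a,\phi)\in\Gamma$ is the returning element with ${}^{\gamma}g_0={}^{\alpha}g_0$. The paper skips this normalization: it conjugates $\alpha$ itself and concludes $h\bar{\alpha}h^{-1}=\bar{\delta}$ with $\delta$ the linear part of $\alpha$. But the paper's translation of the fixed-point hypothesis, namely ``$\delta(g_0)\in g^{-1}\Gamma g_0$, equivalently $g_0^{-1}g\delta(g_0)\in\Gamma'$'', asserts that a pure translation lies in a coset of $\Aff(G)$, and this is equivalent to $\bar{\alpha}(\Gamma g_0)=\Gamma g_0$ only when the returning element can be chosen inside $N=\Gamma\cap G$. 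If $\phi\neq\mathrm{id}_G$, one only gets $(g_0^{-1}a\phi(g_0),\phi)\in\Gamma'$, the paper's final equality $\Gamma'g_0^{-1}g\delta(g_0)\delta(x)=\Gamma'\delta(x)$ is no longer justified, and the automorphism one actually obtains is $\phi^{-1}\delta$ --- exactly your $\delta'=\tau^{-1}\mu\tau$. The distinction is real: on the Klein bottle with holonomy generated by $\mathrm{diag}(-1,1)$, the affine endomorphism induced by $\alpha=((\tfrac{1}{2},\beta),\mathrm{diag}(1,-1))$ has fixed points, every returning element has nontrivial holonomy, and the translation-conjugated map is induced by $-I_2=\phi^{-1}\delta$, not by $\delta$. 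So your route is the one that covers the general case; what the paper's formulation buys, when it applies, is that the resulting endomorphism is induced by the linear part of $\alpha$ itself, which is how Remark \ref{explicitform} and Corollary \ref{fulldesper} are later phrased. Two deferred one-liners you should still record: $\delta'\Gamma'(\delta')^{-1}=\tau^{-1}(\mu\Gamma\mu^{-1})\tau\le\Gamma'$, so $\bar{\delta'}$ is indeed defined, and well-definedness of $h$, which follows from ${}^{\tau}({}^{\tau^{-1}\gamma_1\tau}x)={}^{\gamma_1}({}^{\tau}x)$ for $\gamma_1\in\Gamma$.
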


\noindent The proof of Theorem \ref{topcon} is identical to the proof of \cite[Theorem 4.5.]{deki11-1} which states that every expanding affine infra-nilmanifold endomorphisms is topologically conjugate to an expanding infra-nilmanifold endomorphism. We do give the proof here, since it also gives an exact form of the topologically conjugacy between these maps.
\begin{proof}
One direction of Theorem \ref{topcon} is clear, since every nilmanifold endomorphism has a fixed point and a topological conjugation maps fixed points to fixed points by Proposition \ref{basictopcon}.

For the other direction, write $\alpha = (g,\delta)$ and let $\Gamma g_0$ be a fixed point of $\bar{\alpha}$. Identify the Lie group $G$ with the subgroup of pure translations in $\Aff(G)$. Consider the subgroup $\Gamma^{\prime} = g_0^{-1} \Gamma g_0 \le \Aff(G)$ which is also an almost-Bieberbach group modeled on the Lie group $G$. The point $\Gamma g_0$ is a fixed point, so $ \delta(g_0) \in g^{-1}\Gamma g_0$ or equivalently $g_0^{-1} g \delta(g_0) \in \Gamma^\prime$. Since $\alpha \Gamma \alpha^{-1} = g \delta \Gamma \delta^{-1} g^{-1} \le \Gamma$, we have that $\delta \Gamma \delta^{-1}\le g^{-1} \Gamma g$.

Consider the homeomorphism $h: \Gamma \backslash G \to \Gamma^\prime \backslash G$ given by $$h(\Gamma g) = \Gamma^\prime g_0^{-1} g.$$ This map is indeed well-defined, since if $\gamma \in \Gamma$, then $\gamma^\prime = g_0^{-1} \gamma g_0 \in \Gamma^\prime$ and we have that $$g_0^{-1} \gamma g = \gamma^\prime g_0^{-1} g \in \Gamma^\prime g_0^{-1} g.$$ The automorphism $\delta$ induces an infra-nilmanifold endomorphism on $\Gamma^\prime \backslash G$ since $$\delta \Gamma^\prime \delta^{-1} = \delta(g_0^{-1}) \delta \Gamma \delta^{-1} \delta(g_0) = g_0^{-1} \Gamma g g^{-1} \Gamma g g^{-1} \Gamma g_0 \le g_0^{-1} \Gamma g_0. $$ Moreover, for every $\Gamma^\prime x \in \Gamma^\prime \backslash G$, we have that $$h \left( \bar{\alpha} \left( h^{-1}(\Gamma^\prime x)\right)\right) = h (\bar{\alpha}(\Gamma g_0 x)) = h( \Gamma g \delta(g_0 x)) = \Gamma^\prime g_0^{-1} g \delta(g_0) \delta(x) = \Gamma^\prime \delta(x)  $$ and so $h \bar{\alpha} h^{-1} = \bar{\delta}$. Thus $h$ is a topological conjugacy between $\bar{\alpha}$ and $\bar{\delta}$.
\end{proof}

\begin{Rmk}
\label{explicitform}
The proof of Theorem \ref{topcon} also gives us the exact form of a possible topological conjugation between $\bar{\alpha} $ and $\bar{\delta}$. For the remaining part of this paper, we are mainly interested in the lift of this homeomorphism to the universal cover $G$ and the subgroup $N$ of pure translations. 

If $\Gamma g_0$ is the fixed point of $\bar{\alpha}$, then there is a topological conjugacy which maps $\Gamma g_0$ to $\Gamma^{\prime} e$ with lift to the universal cover $G$ given by $L_{g_0^{-1}}$. If $N$ and $N^\prime$ are the subgroups of pure translations of $\Gamma$ and $\Gamma^\prime$ respectively, then $N^\prime = g_0^{-1} N g_0$ and thus $$\left(N^\prime\right)^\Q = g_0^{-1} N^\Q g_0.$$
\end{Rmk}

\noindent In the following section, we compute $\Per(\bar{\delta})$ and $\ePer(\bar{\delta})$ for nilmanifold endomorphisms $\bar{\delta}$. The results of this section thus allow us to interpret these results in terms of $\Per(\bar{\alpha})$ and $\ePer(\bar{\alpha})$ from general affine infra-nilmanifold endomorphisms.

\section{Sufficient condition for (eventually) periodic points}
In this section, we give a sufficient condition for points of a nilmanifold to be periodic, based on the relative order of an element in the radicable hull.  We first define this relative order and show how it is related to the index of a subgroup for nilpotent groups.

\begin{Def}Let $G$ be a group and take $H \le G$ any subgroup of finite index. We define the \textbf{relative order} of an element $g \in G$ with respect to the subgroup $H$ as $$\ord(g) = \min \{ n > 0 \mid g^n \in H\}.$$
\end{Def} Note that $\ord(g)$ does not depend on the overlying group $G$, therefore we avoid using the group $G$ in the notation. If $N$ is a normal subgroup with natural projection map $\pi: G \to \faktor{G}{N}$, then $\ordn(g)$ is the order of the element $\pi(g)$ in $\faktor{G}{N}$. From the definition, it follows that $g^n \in H$ if and only if $\ord(g) \mid n$. Let $H_1 \le H_2 \le G$ be subgroups, then by definition $g^{\ordz_{H_1}(g)} \in H_1 \le H_2$ and thus $\ordz_{H_2}(g) \mid \ordz_{H_1}(g)$. 

Let $s > 0$ be any integer and consider the set $$X_s = \left\{ g \in G \suchthat \ord(g) \mid s \right\}.$$ The set $X_s$ does not form a subgroup of $G$ in general and denote by $H^{\frac{1}{s}}$ the subgroup generated by $X_s$. If $\varphi: G \to G$ is a group morphism such that $\varphi(H) \le H$, then $$\varphi(g)^{\ord(g)} = \varphi(g^{\ord(g)}) \in \varphi(H) \le H$$ and thus $\ord(\varphi(g)) \mid \ord(g)$. This implies that $\varphi(X_s) \subseteq X_s$ and thus $\varphi$ induces a group morphism on every subgroup $H^{\frac{1}{s}}$ for $s > 0$. 

If $N$ is a $\mathcal{F}$-group with radicable hull $N^\Q$, then every element $n \in N^\Q$ lies in a subgroup which has $N$ as a finite index subgroup. Therefore, we can define $\ordn(n)$ for every element $n \in N^\Q$. The group $N$ is always a subgroup of finite index of the group $N^{\frac{1}{s}}$ in this case. By using this relative order on $N^\Q$, we can construct eventually periodic points for every nilmanifold endomorphism.
\begin{Thm}
\label{subeper}
Let $N$ be a lattice of the nilpotent Lie group $G$ with radicable hull $N^\Q$. If $\bar{\delta}: N \backslash G \to N \backslash G$ is a nilmanifold endomorphism then $$p(N^\Q) \subseteq \ePer(\bar{\delta}).$$
\end{Thm}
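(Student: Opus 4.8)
The plan is to realise $p(N^\Q)$ as a union of finite subsets of $N \backslash G$ that are each preserved by $\bar\delta$, so that Lemma \ref{elem} applies directly. First I would record the decomposition $N^\Q = \bigcup_{s > 0} N^{\frac{1}{s}}$. For the inclusion from left to right, any $n \in N^\Q$ has a well-defined relative order $s = \ordn(n)$, so $n^s \in N$ and hence $n \in X_s \subseteq N^{\frac{1}{s}}$. For the reverse inclusion it suffices to see that $X_s \subseteq N^\Q$: if $g \in G$ satisfies $g^s \in N$, then $g$ is an $s$-th root of $g^s$ in $G$; since $N^\Q$ is radicable and contains $N$ it also contains an $s$-th root of $g^s$, which by uniqueness of roots in the torsion-free nilpotent group $G$ must equal $g$, so $g = (g^s)^{\frac{1}{s}} \in N^\Q$. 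Consequently it is enough to prove $p(N^{\frac{1}{s}}) \subseteq \ePer(\bar\delta)$ for each fixed $s > 0$.

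Next I would check that $p(N^{\frac{1}{s}})$ is a finite set preserved by $\bar\delta$. Finiteness is immediate from the fact, recalled just before the statement, that $N$ is a subgroup of finite index in $N^{\frac{1}{s}}$: the image $p(N^{\frac{1}{s}}) = \left\{ N m \suchthat m \in N^{\frac{1}{s}} \right\}$ consists of exactly $[N^{\frac{1}{s}} : N]$ cosets. For invariance I would use that $\delta$ induces a group morphism on $N^{\frac{1}{s}}$, so that $\delta(N^{\frac{1}{s}}) \le N^{\frac{1}{s}}$; therefore $\bar\delta(Nm) = N \delta(m) \in p(N^{\frac{1}{s}})$ for every $m \in N^{\frac{1}{s}}$, and $\bar\delta$ restricts to a self-map of the finite set $p(N^{\frac{1}{s}})$.

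Finally, Lemma \ref{elem} shows that every point of this finite invariant set is eventually periodic for the restriction, hence for $\bar\delta$ itself, giving $p(N^{\frac{1}{s}}) \subseteq \ePer(\bar\delta)$. Taking the union over all $s > 0$ then yields $p(N^\Q) \subseteq \ePer(\bar\delta)$.

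There is no serious obstacle here: the whole argument rests on producing the right finite $\bar\delta$-invariant sets, and the two facts that make this work, namely that $\delta$ preserves each $N^{\frac{1}{s}}$ and that $[N^{\frac{1}{s}} : N]$ is finite, are both established in the paragraphs preceding the theorem. The only point demanding a moment's care is the identity $N^\Q = \bigcup_{s > 0} N^{\frac{1}{s}}$, that is, the verification that adjoining all roots of elements of $N$ generates precisely the radicable hull.
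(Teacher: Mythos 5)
Your proof is correct and follows essentially the same route as the paper: both arguments place each element of $N^\Q$ inside some $N^{\frac{1}{s}}$ via its relative order, observe that $p(N^{\frac{1}{s}})$ is a finite $\bar\delta$-invariant set because $[N^{\frac{1}{s}}:N]$ is finite and $\delta(N^{\frac{1}{s}}) \le N^{\frac{1}{s}}$, and conclude with Lemma \ref{elem}. The only difference is cosmetic: you package this as the union $N^\Q = \bigcup_{s>0} N^{\frac{1}{s}}$ and verify the (unneeded for this inclusion) reverse containment $N^{\frac{1}{s}} \subseteq N^\Q$, whereas the paper simply takes $n \in N^\Q$ arbitrary and works with $s = \ordn(n)$.
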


\begin{proof}
Take $n \in N^\Q$ arbitrary and let $s = \ordn(n)$, which implies that $n \in N^{\frac{1}{s}}$. Since $N$ is a finite index subgroup of $ N^{\frac{1}{s}}$, the set $p(N^{\frac{1}{s}})$ is finite. From the previous discussion we know that $\delta(N^{\frac{1}{s}}) \le N^{\frac{1}{s}}$ because  $\delta(N) \le N$. This implies that $\bar{\delta}$ induces a map on the finite set $p(N^{\frac{1}{s}})$ and thus every point of $p(N^{\frac{1}{s}})$ is eventually periodic. In particular, $p(n) \in p(N^{\frac{1}{s}})$ is eventually periodic.
\end{proof}

To construct periodic points, we need the additional property that the map $\bar{\delta}$ is injective on the set $p\left(N^{\frac{1}{s}}\right)$. Therefore we need a more careful study of the groups $N^{\frac{1}{s}}$. A first step is the relation between the index of a subgroup and the relative order.

For normal subgroups $N$ of $G$, it follows from Cauchy's theorem that if $p \mid [G:N]$, then there always exists an element $g \in G$ such that $\ordn(G) = p$. We show that this is also true for subnormal subgroups $H$ of finite index. Recall that a subgroup $H \le G$ is called subnormal if there exists a sequence of subgroups $$H = H_0 \le H_1 \le \ldots \le H_m = G$$ with $H_j$ a normal subgroup of $H_{j+1}$. 
\begin{Lem}
\label{cauchy}
Let $G$ be a group and $H \le G$ a subnormal subgroup of finite index. If $p \mid [G:H]$, then there exists $g \in G$ such that $$\ord(g) = p.$$
\end{Lem}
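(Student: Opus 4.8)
The plan is to reduce the statement to a single step of the subnormal series, apply Cauchy's theorem there, and then push the resulting element down to the subgroup $H$ by replacing it with a suitable power.

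First I would exploit the tower law for indices along the subnormal series $H = H_0 \le H_1 \le \cdots \le H_m = G$. Since all the $H_j$ lie between $H$ and $G$, each index $[H_{j+1}:H_j]$ is finite and $[G:H] = \prod_{j=0}^{m-1}[H_{j+1}:H_j]$. As the prime $p$ divides the left-hand side, it divides at least one factor, so there is an index $j$ with $p \mid [H_{j+1}:H_j]$. Because $H_j \triangleleft H_{j+1}$, the quotient $H_{j+1}/H_j$ is an honest finite group whose order is divisible by $p$, and Cauchy's theorem produces an element of order exactly $p$ in it. Lifting this to some $x \in H_{j+1}$ and using that $\ordz_{H_j}(x)$ is the order of the image of $x$ in $H_{j+1}/H_j$ (as noted right after the definition of relative order), I obtain an $x$ with $\ordz_{H_j}(x) = p$.

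At this stage $x$ has the correct relative order with respect to $H_j$, but not necessarily with respect to the smaller subgroup $H$, and repairing this is the main point. Since $H \le H_j$, the divisibility $\ordz_{H_j}(x) \mid \ord(x)$ recorded in the introduction gives $p \mid \ord(x)$, say $\ord(x) = pk$ for some $k \ge 1$. I would then set $g = x^k$ and compute $\ord(g)$. The key observation is that the exponent set $\{ n \in \Z \mid x^n \in H\}$ is a subgroup of $\Z$, hence equals $\ord(x)\Z = pk\Z$; consequently $(x^k)^n = x^{kn} \in H$ if and only if $pk \mid kn$, i.e. if and only if $p \mid n$. This forces $\ord(g) = \ord(x^k) = p$ exactly, which is the desired conclusion.

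The only genuine obstacle is this last step: Cauchy's theorem only yields an element of relative order $p$ with respect to the larger subgroup $H_j$, so one must check that relative order behaves under taking powers exactly as ordinary order does. This follows cleanly from the description of the exponent set as a subgroup of $\Z$, so no real difficulty arises; subnormality is used only to guarantee that some factor $[H_{j+1}:H_j]$ of the index is the order of a bona fide quotient group to which Cauchy's theorem may be applied.
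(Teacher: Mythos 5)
Your proof is correct and follows essentially the same route as the paper: locate a step of the subnormal series where $p$ divides the index via the tower law, apply Cauchy's theorem to the quotient $H_{j+1}/H_j$, and pass to a power of the resulting element. The only difference is that the paper ends by simply asserting that ``a power of the element $g$ satisfies the condition,'' whereas you verify this carefully via the observation that $\{n \in \Z \mid x^n \in H\}$ is a subgroup of $\Z$ --- a detail worth making explicit, and your argument for it is correct.
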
 
\noindent In particular, we can apply this lemma for every nilpotent group $G$, since all subgroups of a nilpotent group are subnormal.
\begin{proof}
The subgroup $H$ is subnormal in $G$, meaning that there exists a sequence of subgroups $$H = H_0 \le H_1 \le \ldots \le H_m = G$$ such that $H_i$ is normal in $H_{i+1}$. Since $p \mid [G:H]$, there exists an $i$ such that $ p \mid [H_{i+1}:H_i]$ and by Cauchy's theorem we know that there exists $g \in H_{i+1}$ such that $p = \ordz_{H_i}(g)$. Because $H \le H_i$, we have that $p = \ordz_{H_i}(g) \mid \ord(g)$ and thus a power of the element $g$ satisfies the condition of the lemma.
\end{proof}

So if $H \le G$ is a finite index subgroup, then the relative order of an element $g \in G$ gives us information about the index of the subgroup $H$ in $G$. The following result of \cite[Proposition 6.3]{sega83-1} shows how the relative order $\ordn(n)$ of elements $n \in N^{\frac{1}{s}}$ looks like.

\begin{Prop}
\label{index2}
Let $G$ be a nilpotent group of nilpotency class at most $c$ and take any $s \in \N$. Let $X$ be a set of generators of $G$ and put $H = \langle x^s \mid x \in X\rangle$. Then $$G^{s^m} \le H$$ for $m = \frac{1}{2} c (c+1)$.
\end{Prop}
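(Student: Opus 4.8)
The plan is to induct on the nilpotency class $c$, peeling off the last nontrivial term of the lower central series at each step. Write $\gamma_i$ for the lower central series of $G$, so that $\gamma_1 = G$, $\gamma_{i+1} = [\gamma_i, G]$, and $\gamma_{c+1} = 1$ by hypothesis; in particular $Z := \gamma_c$ is central in $G$. It suffices to show that $g^{s^m} \in H$ for every $g \in G$, since $H$ is a subgroup and this forces $G^{s^m} \le H$. For the base case $c = 1$ the group is abelian and $m = 1$: every $g$ is a product $\prod_j x_j^{a_j}$ of powers of generators, so $g^s = \prod_j x_j^{s a_j} = \prod_j (x_j^s)^{a_j} \in H$.

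For the inductive step I would first pass to $\bar{G} = G/Z$, which has class at most $c-1$ and is generated by the images of $X$. Applying the induction hypothesis with $m' = \tfrac{1}{2}(c-1)c$ gives $\bar{g}^{s^{m'}} \in \bar{H}$ for all $g$, that is,
$$ g^{s^{m'}} = h z \qquad \text{for some } h \in H,\ z \in Z. $$
The second ingredient is the bound $Z^{s^c} \le H$. Since $\gamma_{c+1} = 1$, the iterated commutator map $(g_1, \ldots, g_c) \mapsto [g_1, \ldots, g_c]$ is multilinear as a map into the central abelian subgroup $Z = \gamma_c$, and $Z$ is generated by the weight-$c$ commutators $[x_{i_1}, \ldots, x_{i_c}]$ with $x_{i_j} \in X$. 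Multilinearity then yields
$$ [x_{i_1}, \ldots, x_{i_c}]^{s^c} = [x_{i_1}^s, \ldots, x_{i_c}^s] \in H, $$
and as these generate the abelian group $Z$, every element of $Z$ has its $s^c$-th power in $H$, i.e.\ $Z^{s^c} \le H$.

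Finally I would combine the two ingredients. Because $z$ is central it commutes with $h$, so
$$ g^{s^{m'}\cdot s^c} = \left(g^{s^{m'}}\right)^{s^c} = (hz)^{s^c} = h^{s^c} z^{s^c} \in H, $$
giving $g^{s^{m'+c}} \in H$. Since $m' + c = \tfrac{1}{2}(c-1)c + c = \tfrac{1}{2}c(c+1) = m$, this closes the induction and proves the proposition.

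The main obstacle — and the only step that deserves real care — is the second ingredient: one must verify that $\gamma_c$ is generated by the weight-$c$ commutators in the generators $X$, and that modulo $\gamma_{c+1} = 1$ the $s$-th powers pull out of each commutator slot cleanly, so that $[x^s, y, \ldots] = [x, y, \ldots]^s$ and hence an exponent $s^c$ appears when all $c$ slots are filled by $s$-th powers. The base case and the central-element bookkeeping in the last display are routine by comparison.
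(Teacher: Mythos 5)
Your proof is correct: the induction on nilpotency class, together with the two standard facts you rightly flag as the crux (that $\gamma_c(G)$ is generated by the weight-$c$ left-normed commutators in the generators of $G$, and that when $\gamma_{c+1}(G)=1$ the weight-$c$ commutator map is multilinear, so $[x_{i_1}^s,\dots,x_{i_c}^s]=[x_{i_1},\dots,x_{i_c}]^{s^c}\in H$), yields exactly the exponent $1+2+\cdots+c=\tfrac{1}{2}c(c+1)$ via the central decomposition $g^{s^{m'}}=hz$. The paper itself gives no proof of this proposition --- it cites it as Proposition 6.3 of Segal's \emph{Polycyclic Groups} --- and your argument is essentially the standard one found there, so nothing further is needed.
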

\noindent Note that the result can be translated to relative orders of subgroups, since $$G^{s^m} \le H$$ is equivalent to $$\ord(g) \mid s^m$$ for all $g \in G$. If we apply this result to the groups $H^{\frac{1}{s}}$, we get the following translation.

\begin{Prop}
\label{stothek}
Let $G$ be a finitely generated nilpotent group and $H \le G$ a subgroup. Then $[H^{\frac{1}{s}}:H] \mid s^k$ for some $k \in \N$. 
\end{Prop}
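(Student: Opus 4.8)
The plan is to feed the defining generators of $H^{\frac{1}{s}}$ into Proposition~\ref{index2} and then convert the resulting power-subgroup containment into an index bound. Write $K = H^{\frac{1}{s}}$. As a subgroup of the finitely generated nilpotent group $G$, the group $K$ is itself finitely generated and nilpotent; let $c$ denote the nilpotency class of $G$, so that $K$ has class at most $c$ as well, and note $H \le K$ by construction. Recall that $K$ is generated by $X_s = \{g \in G \mid g^s \in H\}$; since $K$ is finitely generated, I would first pass to a finite subset $X_0 \subseteq X_s$ with $\langle X_0\rangle = K$. Applying Proposition~\ref{index2} to $K$ with generating set $X_0$, the subgroup $\langle g^s \mid g \in X_0\rangle$ is contained in $H$ (because $g^s \in H$ for each $g \in X_0 \subseteq X_s$), so for $m = \frac{1}{2}c(c+1)$ we obtain
$$K^{s^m} \le \langle g^s \mid g \in X_0\rangle \le H \le K,$$
where $K^{s^m}$ denotes the subgroup generated by the $s^m$-th powers of elements of $K$.

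It then remains to show that $[K : K^{s^m}]$ divides a power of $s$. The subgroup $K^{s^m}$ is fully invariant, hence normal, and in the quotient $Q = \faktor{K}{K^{s^m}}$ every element satisfies $\bar{k}^{s^m} = 1$. Thus $Q$ is a finitely generated nilpotent group of finite exponent dividing $s^m$; being finitely generated nilpotent it is polycyclic, and a polycyclic torsion group is finite, so $Q$ is finite. By Cauchy's theorem every prime dividing $|Q|$ produces an element whose order is that prime, which must divide the exponent $s^m$ and hence $s$; therefore the only primes dividing $|Q|$ are primes dividing $s$, and so $|Q|$ divides some power of $s$.

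Finally, from $K^{s^m} \le H \le K$ and multiplicativity of the index I conclude $[K : H] \mid [K : K^{s^m}] = |Q|$, which divides a power of $s$; this is precisely the claim $[H^{\frac{1}{s}} : H] \mid s^k$ for a suitable $k$. The substantive input is entirely Proposition~\ref{index2}; the step I expect to require the most care is the passage from the containment $K^{s^m} \le H$ to a genuine index bound, which rests on the finiteness of the torsion nilpotent quotient $Q$ together with the prime-by-prime control supplied by Cauchy's theorem.
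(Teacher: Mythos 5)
Your proof is correct, and its first half coincides with the paper's: both feed the generators of $K = H^{\frac{1}{s}}$ into Proposition \ref{index2} to obtain $K^{s^m} \le \langle x^s \mid x \in X_s \rangle \le H$ with $m = \frac{1}{2}c(c+1)$; your reduction to a finite generating subset $X_0$ is harmless but unnecessary, since Proposition \ref{index2} allows an arbitrary generating set and the paper applies it to all of $X_s$. Where you genuinely diverge is in converting this containment into the divisibility $[K:H] \mid s^k$. The paper argues by contradiction via its Lemma \ref{cauchy}, the Cauchy-type statement for relative orders with respect to \emph{subnormal} subgroups: a prime $p$ with $\gcd(p,s)=1$ dividing $[K:H]$ would also divide $[K:H^\prime]$ for $H^\prime = \langle x^s \mid x \in X_s \rangle$ and would produce an element $g$ with $\ordz_{H^\prime}(g) = p$, contradicting $\ordz_{H^\prime}(g) \mid s^m$. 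You instead proceed directly: $K^{s^m}$ is fully invariant, hence normal, the quotient $Q = \faktor{K}{K^{s^m}}$ is a finitely generated nilpotent torsion group of exponent dividing $s^m$, hence finite (polycyclic torsion groups are finite), and classical Cauchy applied to the finite group $Q$ forces every prime divisor of $\vert Q \vert$ to divide $s$, whence $[K:H] \mid \vert Q \vert \mid s^k$. Your route buys two things: it avoids the subnormal-order Lemma \ref{cauchy} entirely, replacing it with ordinary Cauchy once normality of the verbal subgroup $K^{s^m}$ is observed (the lemma exists in the paper essentially only to serve this proposition); and it makes explicit the finiteness of $[K:K^{s^m}]$, and hence of $[H^{\frac{1}{s}}:H]$, which the paper's contradiction argument uses tacitly when it speaks of primes dividing that index. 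The paper's route, in exchange, is shorter on the page because the quotient construction and the finiteness argument are absorbed into the already-proved Lemma \ref{cauchy}.
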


\begin{proof}
Apply Proposition \ref{index2} to the set of generators $$X_s = \{g \in G \mid \ord(g) \mid s\}$$ of $H^{\frac{1}{s}}$. The group generated by the elements $x^s$ for $x \in X_s$ is a subgroup of $H$, which we denote by $H^\prime$. Suppose that $[H^{\frac{1}{s}}:H] \nmid s^k$ for every $k > 0$ or equivalently that there exists a prime $p$ with $\gcd(p,s) = 1$ and $p \mid [H^{\frac{1}{s}}:H]$. Then also $p \mid [H^{\frac{1}{s}}:H^\prime]$ and because of Lemma \ref{cauchy} we get that there exists a $g \in H^{\frac{1}{s}}$ such that $\ordz_{H^\prime}(g) = p$. This is a contradiction to Proposition \ref{index2}.
\end{proof}

To construct periodic points for a nilmanifold endomorphism, we start from the idea of Theorem \ref{subeper} in which we constructed eventually periodic points. Following Lemma \ref{elem}, the only thing we still need to find periodic points is injectivity of the induced map. This idea is exploited in the following theorem by restricting to certain groups $N^{\frac{1}{s}}$ depending on the determinant of the endomorphism.

\begin{Thm}
\label{perD}
Let $N \backslash G$ be a nilmanifold. Consider the radicable hull $N^\Q$ of $N$ and the subset $$N^\Q_{D} = \big\{n \in N^\Q \suchthat \gcd(D,\ordn(g))=1\big\}.$$ If $\bar{\delta}$ is a nilmanifold endomorphism on $N \backslash G$ with determinant $D$, then $$p(N_D^\Q) \subseteq \Per(\bar{\delta}).$$
\end{Thm}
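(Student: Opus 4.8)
The plan is to mimic the construction of eventually periodic points in Theorem \ref{subeper}, but to upgrade ``eventually periodic'' to ``periodic'' by showing that $\bar{\delta}$ acts \emph{injectively} on a suitable finite set, as prescribed by Lemma \ref{elem}. Fix $n \in N^\Q_D$ and put $s = \ordn(n)$, so that $n \in N^{\frac{1}{s}}$ and $\gcd(D,s) = 1$. Exactly as in the proof of Theorem \ref{subeper}, the set $p(N^{\frac{1}{s}})$ is finite and $\bar{\delta}$ maps it into itself, since $\delta(N) \le N$ implies $\delta(N^{\frac{1}{s}}) \le N^{\frac{1}{s}}$. Moreover, by Proposition \ref{stothek} the index $t := [N^{\frac{1}{s}} : N]$ divides $s^k$ for some $k \in \N$, and hence $\gcd(D,t) = 1$ as well.

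The heart of the argument is to show that $\bar{\delta}$ is injective on $p(N^{\frac{1}{s}})$. I claim it suffices to prove the implication: if $m \in N^{\frac{1}{s}}$ satisfies $\delta(m) \in N$, then $m \in N$. Indeed, granting this, whenever $m_1, m_2 \in N^{\frac{1}{s}}$ satisfy $N\delta(m_1) = N\delta(m_2)$ we get $\delta(m_1^{-1}m_2) \in N$, hence $m_1^{-1}m_2 \in N$ and $Nm_1 = Nm_2$, which is precisely injectivity. To prove the implication, consider the subgroup $R := N^{\frac{1}{s}} \cap \delta^{-1}(N)$. Since $N \le N^{\frac{1}{s}}$ and $N \le \delta^{-1}(N)$ (the latter because $\delta(N) \le N$), we have $N \le R$, and of course $m \in R$.

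The key computation is that $R = N$, which I obtain purely from the coprimality of indices. On the one hand, from $N \le R \le N^{\frac{1}{s}}$ we get that $[R:N]$ divides $[N^{\frac{1}{s}}:N] = t$. On the other hand, since $\delta$ is an automorphism of $G$ with $\delta(N) \le N$, it restricts to an isomorphism $\delta^{-1}(N) \to N$, whence $[\delta^{-1}(N):N] = [N : \delta(N)] = |D|$, the last equality being the standard fact that an automorphism multiplies the index of a full-rank subgroup by the absolute value of its determinant. As $N \le R \le \delta^{-1}(N)$, this shows $[R:N]$ also divides $|D|$. Because $\gcd(t,|D|) = 1$, we conclude $[R:N] = 1$, so $R = N$ and $m \in N$, as desired. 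Injectivity of $\bar{\delta}$ on the finite set $p(N^{\frac{1}{s}})$ now follows, and Lemma \ref{elem} gives $p(N^{\frac{1}{s}}) \subseteq \Per(\bar{\delta})$; in particular $p(n) \in \Per(\bar{\delta})$. Since $n \in N^\Q_D$ was arbitrary, this yields $p(N^\Q_D) \subseteq \Per(\bar{\delta})$.

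The main obstacle is the injectivity step, concentrated in the identity $[\delta^{-1}(N):N] = |D|$, which is what translates the determinant of $\bar{\delta}$ into a group index and lets the hypothesis $\gcd(D,\ordn(n))=1$ do its work; once this determinant is turned into the index $|D|$ and the relation $\gcd(D,s)=1$ has been propagated to $\gcd(D,t)=1$ via Proposition \ref{stothek}, the remainder is a short coprimality bookkeeping argument. It is worth noting that this argument uses neither normality of $N$ in $N^{\frac{1}{s}}$ nor any surjectivity statement, but only the intersection $R = N^{\frac{1}{s}} \cap \delta^{-1}(N)$ together with multiplicativity of the index.
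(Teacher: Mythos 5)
Your proof is correct and takes essentially the same route as the paper: the paper establishes injectivity of $\bar{\delta}$ on $p(N^{\frac{1}{s}})$ by proving $\delta(N^{\frac{1}{s}}) \cap N = \delta(N)$ via the same combination of Lemma \ref{elem}, Proposition \ref{stothek}, the determinant-equals-index fact and coprimality of indices, and that identity is precisely the image under $\delta$ of your identity $N^{\frac{1}{s}} \cap \delta^{-1}(N) = N$. The only slip is one of coset orientation: since points of $N \backslash G$ are right cosets, $N\delta(m_1) = N\delta(m_2)$ yields $\delta(m_1 m_2^{-1}) \in N$ rather than $\delta(m_1^{-1} m_2) \in N$, and applying your claim to $m_1 m_2^{-1}$ then gives $N m_1 = N m_2$ directly.
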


\begin{proof}
Take $n \in N^\Q_D$, so $\ordn(n) = s$ with $\gcd(D,s) = 1$. Consider the group $N^{\frac{1}{s}}$ which contains $N$ as a subgroup of finite index and note that $n \in N^{\frac{1}{s}}$. We claim that $\bar{\delta}$ is injective on the finite set $p(N^{\frac{1}{s}})$. Because of Lemma \ref{elem}, this implies that the point $p(n)$ is periodic.

Injectivity of $\bar{\delta}$ on $p(N^{\frac{1}{s}})$ is equivalent to showing that $\delta(N^{\frac{1}{s}}) \cap N = \delta(N)$. Indeed, assume that $\bar{\delta}(p(n_1)) = N \delta(n_1)  = N \delta(n_2) = \bar{\delta}(p(n_2))$ for $n_i \in N^{\frac{1}{s}}$, then $$\delta(n_1) \delta(n_2^{-1}) = \delta(n_1n_2^{-1})\in N \cap \delta(N^{\frac{1}{s}}) = \delta(N)$$ and therefore $n_1 n_2^{-1}  \in N$ because $\delta$ is injective. So this implies $N n_1  = N n_2$ and thus that $\bar{\delta}$ is injective on $p(N^{\frac{1}{s}})$.

So the only thing left to show is that $\delta(N^{\frac{1}{s}}) \cap N = \delta(N)$. We know that $\delta(N) \le \delta(N^{\frac{1}{s}}) \cap N $ and that $\delta(N)$ is a subgroup of index $\vert D \vert$ in $N$. Therefore it suffices to show that also $ \delta(N^{\frac{1}{s}}) \cap N$ is a subgroup of index $\vert D \vert$ in $N$ to conclude that $\delta(N) = \delta(N^{\frac{1}{s}}) \cap N$. The subgroup $\delta(N^{\frac{1}{s}})$ is a subgroup of index $\vert D \vert$ in $N^{\frac{1}{s}}$. It follows that $$\vert D \vert = [N^{\frac{1}{s}}: \delta(N^{\frac{1}{s}})]\hspace{1mm} \big| \hspace{1mm} [N^{\frac{1}{s}}: \delta(N^{\frac{1}{s}}) \cap N ] = [N^{\frac{1}{s}}:N] [N:\delta(N^{\frac{1}{s}}) \cap N]$$ where $[N^{\frac{1}{s}}:N] \hspace{1mm} \big| \hspace{1mm} s^k$ for some $k > 0$ by Proposition \ref{stothek}. Since $\gcd(s,D) = 1$ this implies that $\delta(N^{\frac{1}{s}}) \cap N $ has index $\vert D \vert$ in $N$. Thus $\delta(N^{\frac{1}{s}}) \cap N = \delta(N)$ which finishes the proof.
\end{proof}

The following result is an immediate corollary of Theorem \ref{perD}.

\begin{Thm}
\label{maindense}
Let $\bar{\alpha}$ be an affine infra-nilmanifold endomorphism of the infra-nilmanifold $\Gamma \backslash G$. Then either $\Per(\bar{\alpha}) = \emptyset$ or $\Per\left(\bar{\alpha}\right)$ is a dense subset of $\Gamma \backslash G$.
\end{Thm}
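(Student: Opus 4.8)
The plan is to reduce the statement, in three steps, to the density supplied by Theorem~\ref{perD} and then to check that the set $N_D^\Q$ appearing there is dense. If $\Per(\bar\alpha)=\emptyset$ there is nothing to prove, so suppose $\bar\alpha$ has a periodic point; then $\bar\alpha^k$ has a fixed point for some $k>0$. As $\Per(\bar\alpha)=\Per(\bar\alpha^k)$ and $\bar\alpha^k$ is again an affine infra-nilmanifold endomorphism, I may replace $\bar\alpha$ by $\bar\alpha^k$ and assume $\bar\alpha$ itself has a fixed point. Theorem~\ref{topcon} then makes $\bar\alpha$ topologically conjugate to an infra-nilmanifold endomorphism $\bar\delta$ on some $\Gamma'\backslash G$, and by Proposition~\ref{basictopcon} the conjugacy maps $\Per(\bar\alpha)$ onto $\Per(\bar\delta)$; being a homeomorphism it preserves density, so it is enough to show that $\Per(\bar\delta)$ is dense.

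Next I would descend to the nilmanifold cover. Writing $N\triangleleft\Gamma'$ for the Fitting subgroup and $p\colon N\backslash G\to\Gamma'\backslash G$ for the finite covering map of Theorem~\ref{reduc}, the lift $\tilde\delta$ is a nilmanifold endomorphism with $p^{-1}(\Per(\bar\delta))=\Per(\tilde\delta)$. By Lemma~\ref{density}$(2)$, $\Per(\bar\delta)$ is dense in $\Gamma'\backslash G$ precisely when $\Per(\tilde\delta)$ is dense in $N\backslash G$, so everything reduces to the nilmanifold endomorphism $\tilde\delta$. Let $D\neq 0$ be its determinant and let $\pi\colon G\to N\backslash G$ be the universal covering. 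Theorem~\ref{perD} gives $\pi(N_D^\Q)\subseteq\Per(\tilde\delta)$, so by Lemma~\ref{density}$(1)$ it suffices to prove that $N_D^\Q$ is dense in $G$.

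Establishing this density is the heart of the proof, and the step I expect to be the main obstacle. Fix a prime $q$ not dividing $D$. I first claim that $N^{\frac{1}{q^j}}\subseteq N_D^\Q$ for every $j$: by Proposition~\ref{stothek} the index $[N^{\frac{1}{q^j}}:N]$ is a power of $q$, so a subnormal series from $N$ up to $N^{\frac{1}{q^j}}$ (all subgroups of the nilpotent group $G$ being subnormal) has every factor of $q$-power order, and a telescoping argument then shows that each element of $N^{\frac{1}{q^j}}$ has relative order $\ordn$ a power of $q$, hence coprime to $D$. The delicate point is exactly that $N^{\frac{1}{q^j}}$ is merely the group generated by the elements of order dividing $q^j$, so one must rule out that forming this group creates elements whose relative order is divisible by $D$. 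Finally I would show that $\bigcup_j N^{\frac{1}{q^j}}$ is dense in $G$: choosing $m_1,\dots,m_n\in N$ whose logarithms form a basis of $\lie$ (such exist since $\log N$ spans $\lie$), the coordinate map $(t_1,\dots,t_n)\mapsto\prod_i m_i^{t_i}=\prod_i\exp(t_i\log m_i)$ is a diffeomorphism $\R^n\to G$ sending $\frac{1}{q^j}\Z^n$ into $N^{\frac{1}{q^j}}$, and since $\bigcup_j\frac{1}{q^j}\Z^n$ is dense in $\R^n$ this yields the density of $N_D^\Q$ in $G$, and hence that of $\Per(\bar\alpha)$.
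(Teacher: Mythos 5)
Your reduction chain is exactly the paper's proof of Theorem \ref{maindense}: pass to a power of $\bar{\alpha}$ to obtain a fixed point, invoke Theorem \ref{topcon} together with Proposition \ref{basictopcon}, descend to the nilmanifold cover via Theorem \ref{reduc} and Lemma \ref{density}, and conclude with Theorem \ref{perD}. The paper at that point simply asserts, without proof, that $N^\Q_D$ is dense in $G$; you correctly identify this as the substantive remaining point and supply an argument for it, which is a genuine addition. Your first claim, that $N^{\frac{1}{q^j}} \subseteq N^\Q_D$ for any prime $q \nmid D$, is correct, and your subnormal telescoping is valid; note, though, that it is an unnecessary detour, since the remark following Proposition \ref{index2} already gives $\ordn(g) \mid q^{jm}$ directly for every $g \in N^{\frac{1}{q^j}}$.

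The one step that fails as written is the final diffeomorphism claim: for an \emph{arbitrary} choice of $m_1, \dots, m_n \in N$ whose logarithms form a basis of $\lie$, the second-kind coordinate map $(t_1,\dots,t_n) \mapsto \prod_i \exp(t_i \log m_i)$ need not be a bijection onto $G$. For a concrete counterexample, take $G$ the Heisenberg group with $[X,Y] = Z$, $N$ the standard integer lattice, and $m_1 = \exp(X)$, $m_2 = \exp(Y)$, $m_3 = \exp(X+Z)$ (all in $N$, with logarithms a basis of $\lie$): by Baker--Campbell--Hausdorff the product $\exp(tX)\exp(sY)\exp(u(X+Z))$ equals $\exp\bigl((t+u)X + sY + (\tfrac{ts}{2} + u - \tfrac{su}{2})Z\bigr)$, and every image point with $Y$-coordinate $s = 1$ has $Z$-coordinate $\tfrac{t+u}{2}$, so the map is not surjective. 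The repair is standard and local: since $N$ is a lattice of $G$, it admits a Mal'cev basis $m_1,\dots,m_n \in N$ adapted to a polycyclic series of $N$ (see \cite{sega83-1}), for which the second-kind coordinate map \emph{is} a global diffeomorphism $\R^n \to G$ carrying $\Z^n$ onto $N$; each $m_i^{1/q^j}$ has $\ordn\bigl(m_i^{1/q^j}\bigr) \mid q^j$ and so lies in $N^{\frac{1}{q^j}}$, hence the image of $\frac{1}{q^j}\Z^n$ lies in the group $N^{\frac{1}{q^j}}$, and density of $\bigcup_j \frac{1}{q^j}\Z^n$ in $\R^n$ then gives density of $N^\Q_D$ in $G$ as you intended. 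With that substitution your proof is complete and, beyond the density verification, coincides with the paper's.
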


\begin{proof}
Assume that $\bar{\alpha}$ has periodic points, then we show that $\Per(\bar{\alpha})$ is dense. Since $\Per(\bar{\alpha}^k) = \Per(\bar{\alpha})$ we can take some power of $\bar{\alpha}$ and assume that $\bar{\alpha}$ has a fixed point. By Theorem \ref{topcon} we can therefore assume that $\bar{\alpha}$ is an infra-nilmanifold endomorphism. An application of Theorem \ref{reduc} and Lemma \ref{density} shows that it suffices to consider the case of a nilmanifold endomorphism. 

So let $\bar{\delta}: N \backslash G \to N \backslash G$ be a nilmanifold endomorphism. Denote by $p: G \to N \backslash G$ the projection map and let $D$ be the determinant of $\delta \in \Aut(G)$. The set $N_D^\Q$ forms a dense subset of $G$ and Lemma \ref{density} implies that $p(N_D^\Q)$ is dense in $\Gamma \backslash G$. Since Theorem \ref{perD} shows that $p(N_D^\Q)$ is a subset of $\Per(\bar{\delta})$, this ends the proof.
\end{proof}

%
%
%

%

In this section we constructed a dense subset of periodic points for affine infra-nilmanifold endomorphisms $\bar{\alpha}$. A more general question is to give a full description of the sets $\Per(\bar{\alpha})$ and $\ePer(\bar{\alpha})$. For this we first need a necessary condition for a point $\Gamma g$ of $\Gamma \backslash G$ to be (eventually) periodic, which we deduce in the following section.

\section{Necessary condition for (eventually) periodic points}

In this section, we give a necessary condition for a point to be a(n) (eventually) periodic point of a nilmanifold endomorphism. Combined with the results of the previous sections, this will give us a complete description of the set of eventually periodic points, also in the case of general affine infra-nilmanifold endomorphisms.

Every torsion-free, radicable nilpotent group $N^\Q$ is the radicable hull of an $\mathcal{F}$-group $N$ and every such a group $N^\Q$ corresponds to a rational Lie algebras $\lien^\Q$ via the Baker-Campbell-Hausdorff formula. Under this correspondence, group morphisms correspond to Lie algebra automorphisms and radicable subgroups $H^\Q$ of $N^\Q$ correspond to rational subalgebras of $\lien^\Q$. If $\varphi: N^\Q \to N^\Q$ is a group morphism, then $\ker(\varphi)$ is a radicable normal subgroup of $N^\Q$. By $N^\R$ we will denote the unique simply connected and connected Lie group which contains $N^\Q$ as a dense subgroup and this Lie group corresponds to the real Lie algebra $\R \otimes \lien^\Q = \lien^\R$. The group $N$ is a lattice of the Lie group $N^\R$. Every group morphism $\varphi: N^\Q \to N^\Q$ uniquely extends to an element of $\Aut(N^\R)$ and will denote this extension as $\varphi^\R$. For more details, we refer to \cite{sega83-1}.

Let $G$ be a simply connected and connected nilpotent Lie group with lattice $N \le G$, implying that $G \approx N^\R$. Assume that $\bar{\delta}$ is a nilmanifold endomorphism induced by $\delta \in \Aut(G)$. If the coset $N g$ is an eventually periodic points of $\bar{\delta}$, then there exist $0 \leqslant k_1 < k_2$ such that $$N \delta^{k_1}(g) = N \delta^{k_2}(g)$$ or equivalently such that $$\delta^{k_2}(g) \left(\delta^{k_1}(g)\right)^{-1} = \delta^{k_2}(g) \delta^{k_1}\left(g^{-1}\right) \in N \le N^\Q.$$ More generally, we are interested in the question which elements $n$ of the real Mal'cev completion $N^\R$ of a torsion-free radicable nilpotent group $N^\Q$ satisfy a relation of the form 
\begin{align*}
\varphi^\R(n) \psi^\R\left(n^{-1}\right) \in N^\Q
\end{align*}
with $\varphi, \psi: N^\Q \to N^\Q$ automorphisms of the group $N^\Q$. The following theorem gives an answer to this question.
\begin{Thm}
\label{coset}
Let $N^\Q$ be a torsion-free radicable nilpotent group with group morphisms $\varphi, \psi: N^\Q \to N^\Q$ and take $H^\Q \le N^\Q$ the radicable subgroup defined as $$H^\Q = \left\{h \in N^\Q \suchthat \varphi(h) = \psi(h)\right\}.$$ Then for all $n \in N^\R$ we have the following equivalence:
\begin{align*}
\varphi^\R(n) \psi^\R(n)^{-1} \in N^\Q \iff n \in N^\Q H^\R.
\end{align*}
\end{Thm}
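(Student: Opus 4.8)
I would begin with the reverse implication, which is a direct computation, together with a reformulation of $H^\R$ that makes it usable. Writing $n = n_0 h$ with $n_0 \in N^\Q$ and $h \in H^\R$, and using that $\varphi^\R$ and $\psi^\R$ agree on $h$, one gets $\varphi^\R(n)\psi^\R(n)^{-1} = \varphi(n_0)\,\varphi^\R(h)\psi^\R(h)^{-1}\,\psi(n_0)^{-1} = \varphi(n_0)\psi(n_0)^{-1} \in N^\Q$. The input here is the identity $H^\R = \{n \in N^\R \mid \varphi^\R(n) = \psi^\R(n)\}$, which I would prove by noting that, since $k$-th roots are unique in a torsion-free nilpotent group, $H^\Q$ is radicable; hence in $\log$-coordinates $\lieh^\Q = \log H^\Q$ is the rational kernel of $\varphi_\ast - \psi_\ast$, its real span is the full real kernel, and exponentiating gives the claim. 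In particular $\varphi^\R$ and $\psi^\R$ coincide on all of $H^\R$.

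The forward implication is the real content, and the main obstacle is visible at once: the map $\Theta(n) := \varphi^\R(n)\psi^\R(n)^{-1}$ is not a homomorphism once $N^\R$ is nonabelian, so neither a kernel argument nor linear algebra applies to it directly. The plan is to induct on the nilpotency class $c$ of $N^\Q$, linearising the obstruction one central layer at a time. I would set $C = \gamma_c(N^\Q)$, the last nontrivial term of the lower central series: it is central, abelian, radicable, and preserved by $\varphi$ and $\psi$, so these descend to $\bar N^\Q = N^\Q/C$, of smaller class. Projecting the hypothesis $\Theta(n) \in N^\Q$ and applying the induction hypothesis to the quotient yields $\bar n \in \bar N^\Q \bar H^\R$; since $\bar H^\R$ is, by the reformulation applied to the quotient, the locus where $\bar\varphi^\R$ and $\bar\psi^\R$ agree, lifting lets me write $n = n_1 h'$ with $n_1 \in N^\Q$ and $h'$ in the subgroup $\hat H^\R = \{m \in N^\R \mid \Theta(m) \in C^\R\}$.

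The crux is that on $\hat H^\R$ the obstruction does become a homomorphism: for $m \in \hat H^\R$ the value $\Theta(m)$ lies in the central abelian group $C^\R$, so the twisted identity $\Theta(m_1 m_2) = \varphi^\R(m_1)\Theta(m_2)\psi^\R(m_1)^{-1}$ collapses to $\Theta(m_1)\Theta(m_2)$. Thus $\Theta \colon \hat H^\R \to C^\R$ is a $\Q$-defined homomorphism with kernel exactly $H^\R$. Feeding $n = n_1 h'$ back in, centrality of $C^\R$ gives $\Theta(n) = \Theta(h')\,\varphi(n_1)\psi(n_1)^{-1}$, so the hypothesis forces $\Theta(h') \in C^\R \cap N^\Q = C$.

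It then remains to solve $\Theta(n_0) = \Theta(h')$ rationally, which is the linear-algebra fact that also handles the abelian base case. Because $\Theta$ is defined over $\Q$ and $\hat H^\Q$ is radicable, $\Theta(\hat H^\Q)$ is a $\Q$-subspace of $C \cong \Q^m$ whose real span is $\Theta(\hat H^\R)$; hence any rational vector in the real image, in particular $\Theta(h')$, already lies in $\Theta(\hat H^\Q)$. Picking $n_0 \in \hat H^\Q \subseteq N^\Q$ with $\Theta(n_0) = \Theta(h')$ gives $n_0^{-1}h' \in \ker\Theta = H^\R$, so $n = n_1 n_0\,(n_0^{-1}h') \in N^\Q H^\R$. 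The whole scheme is arranged so that $\Theta$ is only ever used where its values are central, the one situation in which it linearises, which is exactly how the non-homomorphism obstruction is circumvented.
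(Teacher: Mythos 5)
Your proof is correct and is essentially the paper's own argument: both proofs linearise the obstruction $\Theta(n)=\varphi^\R(n)\psi^\R(n)^{-1}$ along the lower central series, exploiting that $\Theta$ becomes a genuine homomorphism precisely where its values are central, and both settle each layer with the same rational-versus-real linear algebra fact (the paper's Lemma \ref{image}). The only difference is organisational: you recurse on the nilpotency class via the quotient by $\gamma_c(N^\Q)$, whereas the paper runs a descending induction along the filtration $N^\Q_i=\{n\in N^\Q \mid \varphi(n)\psi(n)^{-1}\in\gamma_i(N^\Q)\}$ inside the fixed group, and unfolding your recursion recovers exactly that filtration (your $\hat H^\R$ coincides with the paper's $N^\R_c$).
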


\noindent In the statement of Theorem \ref{coset}, $N^\Q H^\R$ is the product of the subgroups $N^\Q$ and $H^\R$ which in general does not form a subgroup of $N^\R$. Note that the group $H^\Q$ is indeed a radicable subgroup, since if $h \in H^\Q$, then 
$$\varphi(h^{\frac{1}{s}}) = \varphi(h)^{\frac{1}{s}} = \psi(h)^{\frac{1}{s}} = \psi(h^{\frac{1}{s}})$$ and thus $h^{\frac{1}{s}} \in H^\Q$. To prove Theorem \ref{coset}, we will use induction depending on the lower central series of $N^\Q$, combined with the following easy lemma, which corresponds to the abelian case of Theorem \ref{coset}.

\begin{Lem}
\label{image}
Let $\varphi: \Q^n \to \Q^m$ be a linear map and take the unique extension $\varphi^\R: \R^n \to \R^m$. Then it holds that $\varphi^\R(\R^n) \cap \Q^m = \varphi(\Q^n)$.
\end{Lem}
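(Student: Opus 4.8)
The plan is to establish the two inclusions separately, noting that one is immediate while all the content lies in the reverse one.

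For the easy inclusion $\varphi(\Q^n) \subseteq \varphi^\R(\R^n) \cap \Q^m$, I would fix a rational matrix $A$ representing $\varphi$ in the standard bases; then $\varphi^\R$ is represented by the same matrix $A$. For $x \in \Q^n$ we have $\varphi(x) = Ax \in \Q^m$ and simultaneously $\varphi(x) = \varphi^\R(x) \in \varphi^\R(\R^n)$, which gives the inclusion.

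For the reverse inclusion, I would take $y \in \varphi^\R(\R^n) \cap \Q^m$, so that $y \in \Q^m$ and the real system $Ax = y$ has a solution in $\R^n$. Since $A$ and $y$ are rational, the idea is to solve this system over $\Q$ instead: running Gaussian elimination on the augmented matrix $[\,A \mid y\,]$ never leaves $\Q$, and the system is consistent exactly when the row-reduced form contains no row $(0,\dots,0 \mid c)$ with $c \neq 0$. Because the real system is consistent by hypothesis, this obstruction is absent, so back-substitution over $\Q$ yields a rational solution $x' \in \Q^n$ with $\varphi(x') = Ax' = y$; hence $y \in \varphi(\Q^n)$.

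The one point that genuinely requires justification, and which I expect to be the only (elementary) obstacle, is that consistency of a rational linear system does not depend on whether it is solved over $\Q$ or over $\R$, equivalently that $\mathrm{rank}_\Q A = \mathrm{rank}_\R A$ and $\mathrm{rank}_\Q [A \mid y] = \mathrm{rank}_\R [A \mid y]$. This holds because the rank of a matrix equals the size of its largest nonvanishing minor, and each minor is a fixed rational number whose vanishing is insensitive to the ambient field. An equivalent route that avoids elimination altogether would be to pick a rational basis $v_1,\dots,v_r$ of $\varphi(\Q^n)$ and extend it to a rational basis of $\Q^m$: such a family is simultaneously an $\R$-basis of $\R^m$, so expressing $y$ in it shows that its coordinates are rational and supported on $v_1,\dots,v_r$, placing $y$ in $\varphi(\Q^n)$.
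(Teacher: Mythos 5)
Your proof is correct, but it takes a genuinely different route from the paper's. You work with the linear system: writing $\varphi$ as a rational matrix $A$, you observe that consistency of $Ax=y$ is a rank condition, and that rank is insensitive to the field extension $\Q \subseteq \R$ (largest nonvanishing minor, or equivalently Gaussian elimination staying inside $\Q$), so a real solution forces a rational one. The paper instead decomposes the \emph{domain}: it writes $\Q^n = \ker(\varphi) \oplus W^\Q$, notes that $\varphi\vert_{W^\Q}$ is an isomorphism onto $\varphi(\Q^n)$, and that its real extension $\varphi^\R\vert_{W^\R}$ is an isomorphism onto $\varphi^\R(\R^n)$; since this isomorphism is defined over $\Q$, the rational points of the real image are precisely $\varphi(\Q^n)$. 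Your second, basis-extension variant is actually much closer in spirit to the paper's argument --- both exploit the fact that rationally defined bases and isomorphisms remain such over $\R$ --- except that you work in the codomain rather than the domain; in that variant you should add one sentence justifying that $v_1,\dots,v_r$ span $\varphi^\R(\R^n)$ over $\R$, which holds because the columns of $A$ are rational vectors lying in $\varphi(\Q^n)$. As for what each approach buys: your rank argument is the most elementary and transfers verbatim to any field extension $K \subseteq L$, whereas the paper's kernel--complement decomposition matches the structural viewpoint (rational subspaces of $\lien^\Q$ and their real completions) that is reused in the inductive proof of Theorem \ref{coset}, which is presumably why the author phrased it that way.
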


\begin{proof}
Consider the rational subspace $V^\Q = \ker(\varphi) \subseteq \Q^n$. Take a complementary subspace $W^\Q \subseteq \Q^n$ for $V^\Q$, meaning that $\Q^n$ can be written as the direct sum $\Q^n = V^\Q \oplus W^\Q$. The restriction $\left. \varphi \right|_{W^\Q}$ of the linear map $\varphi$ to the subspace $W^\Q$ forms an isomorphism between $W^\Q$ and $\varphi(\Q^n)$. Since $\R$ can also be written as the direct sum $\R^n = V^\R \oplus W^\R = \ker(\varphi^\R) \oplus W^\R$, $\left. \varphi^\R \right|_{W^\R}$ forms an isomorphism between $W^\R$ and $\varphi^\R(\R^n)$ which is the extension of $\left. \varphi \right|_{W^\Q}$. This implies that $\varphi^\R(\R^n) \cap \Q^m = \varphi(\Q^n)$.
\end{proof}

The abelian case, so for linear maps $\varphi, \psi: \Q^n \to \Q^n$, follows from applying Lemma \ref{image} to the linear map $\varphi - \psi: \Q^n \to \Q^n$. By using induction, we can now prove the general statement of Proposition \ref{coset}.

\begin{proof}[Proof of Theorem \ref{coset}] One implication is immediate, namely if $n \in N^\Q H^\R$, so $n = m h$ for some $m \in N^\Q, h \in H^\R$, then\begin{align*} \varphi(n) \psi(n)^{-1} &= \varphi(mh) \psi(mh)^{-1}\\&= \varphi(m) \varphi(h) \psi(h)^{-1} \psi(m)^{-1}\\ &= \varphi(m) \psi(m)^{-1} \in N^\Q.\end{align*} 

For the other implication we first introduce some notations. Consider the abelian groups $\faktor{\gamma_i(N^\Q)}{\gamma_{i+1}(N^\Q)}$ with quotient maps $\pi_i: \gamma_i(N^\Q) \to \faktor{\gamma_i(N^\Q)}{\gamma_{i+1}(N^\Q)}$. Take $N^\Q_1$ equal to the group $N^\Q$ and consider $\alpha_1: N^\Q_1 \to \faktor{N^\Q}{\gamma_2(N^\Q)}$ the map defined by $$\alpha_1(n) = \pi_1 \left(\varphi(n) \psi(n)^{-1} \right) = \pi_1(\varphi(n)) - \pi_1(\psi(n)).$$ The map $\alpha_1$ is a group morphism since for every $m, n  \in N^\Q_1$, we have \begin{align*} \alpha_1(m n) &= \pi_1(\varphi(m n)) - \pi_1(\psi( m n)) \\ &=  \pi_1(\varphi(m)) - \pi_1(\psi(m)) + \pi_1(\varphi(n)) - \pi_1(\psi( n)) \\ &= \alpha_1(m) + \alpha_1(n).\end{align*}  Denote by $N^\Q_2$ the kernel of the group morphism $\alpha_1$. 

Inductively, we also define the group morphism $\alpha_i: N^\Q_i \to \faktor{\gamma_i(N^\Q)}{\gamma_{i+1}(N^\Q)}$ given by $$\alpha_i(n) = \pi_i(\varphi(n) \psi(n)^{-1})$$ and the subgroup $N^\Q_i$ of $N^\Q$ as the kernel of the morphism $\alpha_{i-1}$. To show that these maps $\alpha_i$ are indeed group morphisms, we first consider the map $\tilde{\alpha}_i: N^\Q_i \to \faktor{N^\Q}{\gamma_{i+1}(N^\Q)}$ which is the composition of $\alpha_i$ and the natural inclusion $\faktor{\gamma_i(N^\Q)}{\gamma_{i+1}(N^\Q)}$ into $\faktor{N^\Q}{\gamma_{i+1}(N^\Q)}$. A computation shows that 
\begin{align*}\tilde{\alpha}_i(m n) &=\varphi(mn) \psi(mn)^{-1} \gamma_{i+1}(N^\Q) \\ &= \varphi(m) \varphi(n) \psi(n)^{-1} \psi(m)^{-1} \gamma_{i+1}(N^\Q)\\ &= \varphi(m) \tilde{\alpha}_i(n) \psi(m)^{-1} \gamma_{i+1}(N^\Q) \\ &= \tilde{\alpha}_i(m) \tilde{\alpha}_i(n) [\tilde{\alpha}_i(n), \psi(m)^{-1}] \gamma_{i+1}(N^\Q) \\ &= \tilde{\alpha}_i(m) \tilde{\alpha}_i(n) \end{align*} where the last equality holds because $\tilde{\alpha}_i(n) \in \faktor{\gamma_i(N^\Q)}{\gamma_{i+1}(N^\Q)}$. This shows that the map $\tilde{\alpha}_i$ is a group morphism and therefore also the map $\alpha_i$.

The groups $N_i^\Q$ are radicable subgroups of $N^\Q$ and thus we can consider the groups $N_i^\R$. The group $H^\Q$ is a subgroup of every group $N_i^\Q$ since for every $h \in H^\Q$ we have $\varphi(h) \psi(h)^{-1} = 0$. Every group $N_{i+1}^\Q$ is a normal subgroup of $N_i^\Q$ and we have group morphisms $$\bar{\alpha}_i: \faktor{N_i^\Q}{N_{i+1}^\Q} \to \faktor{\gamma_i(N^\Q)}{\gamma_{i+1}(N^\Q)}.$$ The extension $\bar{\alpha}_i^\R$ is given by $$\bar{\alpha}_i^\R: \faktor{N_i^\R}{N_{i+1}^\R} \to \faktor{\gamma_i(N^\R)}{\gamma_{i+1}(N^\R)}$$ with $\bar{\alpha}_i^\R(n) = \pi_i^\R(\varphi^\R(n) \psi^\R(n)^{-1})$. Since $N^\Q$ is $c$-step nilpotent, the group $\faktor{\gamma_{i}(N^\Q)}{\gamma_{i+1}(N^\Q)} = 1$ for every $i \geq c+1$ and therefore the groups $N_i^\Q \le H^\Q$ for every $i \geq c+1$. In particular we get $H = N_{c+1}^\Q$. 

With these groups $N_i^\Q$, we can prove the other implication by using induction on the subgroups $N_i^\R$. If $n \in N_{c+1}^\R$, there is nothing to prove since $N_{c+1}^\R = H^\R$. Now assume that the proposition is true for every $n \in N_{i+1}^\R$, then we will show it is true for $n \in N_{i}^\R$. Consider the map $\bar{\alpha}_i: \faktor{N_i^\Q}{N_{i+1}^\Q} \to \faktor{\gamma_i(N^\Q)}{\gamma_{i+1}(N^\Q)}$ with extension $\bar{\alpha}_i^\R: \faktor{N_i^\R}{N_{i+1}^\R} \to \faktor{\gamma_i(N^\R)}{\gamma_{i+1}(N^\R)}$. This map satisfies the properties of Lemma \ref{image}, so there exists $m \in N_i^\Q$ such that $\bar{\alpha}_i^\R(m) = \bar{\alpha}_i^\R(n)$ or thus that $\tilde{n} = m^{-1} n \in N_{i+1}^\R$. Since $m \in N^\Q$ and thus also $\varphi(m^{-1}), \psi(m) \in N^\Q$, we have that $$\varphi(\tilde{n}) \psi(\tilde{n}^{-1}) = \varphi(m^{-1}) \underbrace{\varphi(n) \psi(n^{-1})}_{\in N^\Q} \psi(m) \in N^\Q.$$ By induction we conclude that $n \in N^\Q H^\R$. 
\end{proof}

As discussed above Theorem \ref{coset}, this results makes it possible to study (eventually) periodic points. A combination with Theorem \ref{subeper} gives us the following result.

\begin{Thm}
\label{fulldes}
Let $\bar{\delta}: N \backslash G \to N \backslash G$ be a nilmanifold endomorphism induced by the map $\delta \in \Aut(G)$ on the nilmanifold $ N \backslash G$ with projection map $p: G \to N \backslash G$. Let $H^\R$ be the subgroup of $G$ defined as $$H^\R = \left\{h \in G \suchthat \exists k > 0:  \delta^k (h) =h\right\},$$ then $$\ePer(\bar{\delta}) = p(N^\Q H^\R).$$
\end{Thm}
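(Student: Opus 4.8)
The plan is to prove the two inclusions $p(N^\Q H^\R) \subseteq \ePer(\bar\delta)$ and $\ePer(\bar\delta) \subseteq p(N^\Q H^\R)$ separately, using the reformulation already recorded above the statement of Theorem \ref{coset}: a coset $Ng$ is eventually periodic for $\bar\delta$ precisely when there are integers $0 \leq k_1 < k_2$ with $N\delta^{k_1}(g) = N\delta^{k_2}(g)$, equivalently $\delta^{k_2}(g)\,\delta^{k_1}(g)^{-1} \in N \subseteq N^\Q$. A preliminary observation feeds everything into Theorem \ref{coset}: since $\delta(N) \le N$, differentiating gives $d\delta(\lien^\Q) \subseteq \lien^\Q$, so $\delta(N^\Q) \subseteq N^\Q$, and as $\delta$ is injective on the finite\-dimensional $N^\Q$ it restricts to an automorphism of $N^\Q$; hence each power $\delta^{k}$ is a group morphism $N^\Q \to N^\Q$ to which Theorem \ref{coset} applies. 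I want to stress the asymmetry that makes the two directions genuinely different: eventual periodicity is a statement about cosets modulo $N$, whereas Theorem \ref{coset} only controls membership in $N^\Q$.

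For $p(N^\Q H^\R) \subseteq \ePer(\bar\delta)$, I would take $g = nh$ with $n \in N^\Q$ and $h \in H^\R$, so $\delta^{k}(h) = h$ for some $k > 0$ and therefore $\delta^{mk}(g) = \delta^{mk}(n)\,h$ for all $m$. By Theorem \ref{subeper} the point $p(n)$ is eventually periodic, so its forward orbit $\{N\delta^{j}(n) : j \geq 0\}$ is finite; in particular there exist $m_1 < m_2$ with $N\delta^{m_1 k}(n) = N\delta^{m_2 k}(n)$, say $\delta^{m_2 k}(n) = \nu\,\delta^{m_1 k}(n)$ with $\nu \in N$. Right\-multiplying by $h$ yields $\delta^{m_2 k}(g) = \nu\,\delta^{m_1 k}(g)$, whence $N\delta^{m_1 k}(g) = N\delta^{m_2 k}(g)$ and $Ng$ is eventually periodic. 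It is essential here to invoke Theorem \ref{subeper} rather than Theorem \ref{coset}, since the former lands in $N$ while the latter would only give a relation modulo $N^\Q$.

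The reverse inclusion is where the real work lies. Given $Ng \in \ePer(\bar\delta)$, I would fix $0 \leq k_1 < k_2$ with $\delta^{k_2}(g)\,\delta^{k_1}(g)^{-1} \in N \subseteq N^\Q$ and apply Theorem \ref{coset} with $\varphi = \delta^{k_2}$ and $\psi = \delta^{k_1}$. This yields $g \in N^\Q K^\R$, where $K^\R$ is the real Mal'cev completion of $K^\Q = \{h \in N^\Q : \delta^{k_2}(h) = \delta^{k_1}(h)\}$; writing $k = k_2 - k_1$ and using invertibility of $\delta$, we have $K^\Q = \{h \in N^\Q : \delta^{k}(h) = h\}$. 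The step I expect to be the main obstacle is matching this completion $K^\R$, which is tied to one specific relation, with a subgroup of the globally\-defined $H^\R$. I would pass to the Lie algebra, where $K^\Q$ corresponds to $\ker(d\delta^{k} - I) \cap \lien^\Q$, and use that $d\delta^{k} - I$ is a $\Q$\-rational endomorphism of $\lien^\Q$, so that the real span of its rational kernel is exactly the real kernel $\ker(d\delta^{k} - I) \subseteq \lie$. Exponentiating identifies $K^\R = \{h \in G : \delta^{k}(h) = h\}$, which lies in $H^\R$ by definition, giving $g \in N^\Q K^\R \subseteq N^\Q H^\R$ and hence $Ng \in p(N^\Q H^\R)$.

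Finally, I would note that $H^\R$ really is a subgroup, so that the product $N^\Q H^\R$ and the statement make sense: if $\delta^{k}(h) = h$ and $\delta^{k'}(h') = h'$ then $\delta^{kk'}(hh') = hh'$ and $\delta^{k}(h^{-1}) = h^{-1}$, since $\delta$ is an automorphism. This monotone union of fixed\-point subgroups does stabilize at a single power, but I do not expect to need that refinement, as each inclusion above already works with one relation at a time.
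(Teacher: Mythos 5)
Your proof is correct and follows essentially the same route as the paper's: Theorem \ref{subeper} yields the inclusion $p(N^\Q H^\R) \subseteq \ePer(\bar{\delta})$, and Theorem \ref{coset} applied to powers of $\delta$ yields the converse. Your only deviations are cosmetic improvements: you apply Theorem \ref{coset} directly with $\varphi = \delta^{k_2}$, $\psi = \delta^{k_1}$ instead of first reducing to $\delta^{k}$ versus the identity, you avoid the paper's ``replace $\delta$ by a power'' normalization by working with multiples of a single exponent $k$, and you make explicit (via rationality of $d\delta^{k} - I$ on $\lien^\Q$) the identification of the Mal'cev completion of $\left\{h \in N^\Q \suchthat \delta^{k}(h) = h\right\}$ with $\Fix(\delta^{k}) \subseteq H^\R$, a step the paper leaves implicit.
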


\noindent The subgroup $H^\R$ of $G$ in this theorem corresponds to the Lie algebra given by the eigenspaces of $\delta$ corresponding to roots of unity. This subgroup can thus be easily computed starting from the automorphism $\delta$.

\begin{proof}
From the discussion above Theorem \ref{coset}, we know that an eventually periodic point $p(g) = Ng$ has to satisfy $\delta^{k_2}(g) \delta^{k_1}\left(g^{-1}\right) \in N^\Q$ for some $0 < k_1 < k_2$. Since $\delta^{k_1}: N^\Q \to N^\Q$ is invertible, this is equivalent to $\delta^k(g) g^{-1} \in N^\Q$ for some $k >0$. Theorem \ref{coset} applied to $\delta^k$ and $\I_{N^\Q}$ implies that $g$ is indeed an element of $N^\Q H^\R$, showing that the inclusion $\ePer(\bar{\delta}) \subseteq p(N^\Q H^\R)$ holds. 

Thus it suffices to show that every point of $p(N^\Q H^\R)$ is eventually periodic. By replacing $\delta$ by a power of itself, we can assume that $h \in H^\R$ implies $\delta(h) = h$. Let $g = n h$ with $n \in N^\Q$ and $h \in H^\R$, then Theorem \ref{subeper} implies that $p(n)$ is eventually periodic or thus that there exist distinct $k_1, k_2 > 0$ such that $$\bar{\delta}^{k_1}(p(n)) = \bar{\delta}^{k_2}(p(n)).$$ Now 
\begin{align*}
\bar{\delta}^{k_1}(p(g)) = N \delta^{k_1}(n) \delta^{k_1}(h) = N \delta^{k_2}(n) h = N \delta^{k_2}(n) \delta^{k_2}(h) = \bar{\delta}^{k_2}(p(g))
\end{align*}
and thus $p(g)$ is eventually periodic.
\end{proof}

By combining Theorem \ref{fulldes} with Theorem \ref{topcon} (and the explicit form of the homeomorphism as described in Remark \ref{explicitform}), we get the following corollary for affine infra-nilmanifold endomorphisms. 

\begin{Cor}
Let $\bar{\alpha}: \Gamma \backslash G \to \Gamma \backslash G$ be an affine infra-nilmanifold endomorphism induced by the affine transformation $\alpha = (g, \delta) \in \Aff(G)$. Denote by $p: G \to \Gamma \backslash G$ the projection map and by $H^\R$ the subgroup of $G$ defined as $$H^\R =\{h \in G \mid \exists k > 0:  \delta^k (h) =h\}.$$ If $\bar{\alpha}$ has a periodic point $\Gamma g_0$, then $$\ePer (\bar{\alpha}) = p( N^\Q g_0 H^\R).$$
\end{Cor}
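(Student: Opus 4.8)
The plan is to combine the complete description of eventually periodic points for nilmanifold endomorphisms (Theorem \ref{fulldes}) with the topological conjugacy of Theorem \ref{topcon}, transporting the set $p(N^\Q H^\R)$ through the explicit conjugation recorded in Remark \ref{explicitform}. The corollary involves no genuinely new idea; it is a matter of chaining the earlier results together and keeping track of where each coset lives.

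First I would reduce to the situation in which the periodic point $\Gamma g_0$ is actually a fixed point. Choosing $k > 0$ with $\bar{\alpha}^k(\Gamma g_0) = \Gamma g_0$, the power $\bar{\alpha}^k$ is induced by $\alpha^k = (g', \delta^k)$ for a suitable $g' \in G$, and passing from $\bar{\alpha}$ to $\bar{\alpha}^k$ alters neither $\ePer(\bar{\alpha}) = \ePer(\bar{\alpha}^k)$, nor the subgroup $H^\R$ (since $\delta^m(h) = h$ for some $m > 0$ holds precisely when $(\delta^k)^m(h) = h$ for some $m > 0$), nor the right-hand side $p(N^\Q g_0 H^\R)$. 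Hence I may assume throughout that $\Gamma g_0$ is a fixed point of $\bar{\alpha}$.

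Next, by Theorem \ref{topcon} and Remark \ref{explicitform}, setting $\Gamma' = g_0^{-1}\Gamma g_0$ with Fitting subgroup $N' = g_0^{-1} N g_0$ (so $(N')^\Q = g_0^{-1} N^\Q g_0$), the map $\bar{\alpha}$ is topologically conjugate to the infra-nilmanifold endomorphism $\bar{\delta}$ induced by $\delta$ on $\Gamma' \backslash G$, via the homeomorphism $h(\Gamma g) = \Gamma' g_0^{-1} g$ whose lift to $G$ is the left translation $L_{g_0^{-1}}$. Proposition \ref{basictopcon} then gives $\ePer(\bar{\alpha}) = h^{-1}(\ePer(\bar{\delta}))$, so it remains to evaluate $\ePer(\bar{\delta})$. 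For this I would apply Theorem \ref{reduc} to $\bar{\delta}$: the covering $p' : N' \backslash G \to \Gamma' \backslash G$ lifts $\bar{\delta}$ to a nilmanifold endomorphism $\tilde{\delta}$ with $\ePer(\bar{\delta}) = p'(\ePer(\tilde{\delta}))$, and Theorem \ref{fulldes} identifies $\ePer(\tilde{\delta}) = q'((N')^\Q H^\R)$, where $q' : G \to N'\backslash G$ is the projection and $H^\R$ is exactly the subgroup appearing in the statement. Composing the two covering maps into the universal projection $\pi' = p' \comp q' : G \to \Gamma'\backslash G$ yields $\ePer(\bar{\delta}) = \pi'((N')^\Q H^\R)$.

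Finally I would pull this description back through $h$. The lift relation $h \comp p = \pi' \comp L_{g_0^{-1}}$ rearranges to $h^{-1} \comp \pi' = p \comp L_{g_0}$, so that $h^{-1}(\pi'(S)) = p(g_0 S)$ for every subset $S \subseteq G$. Taking $S = (N')^\Q H^\R = g_0^{-1} N^\Q g_0 H^\R$, the factor $g_0 g_0^{-1}$ cancels and I obtain $\ePer(\bar{\alpha}) = p(N^\Q g_0 H^\R)$, as claimed. Since every ingredient is already established, the only real difficulty is the bookkeeping: keeping the two quotients $\Gamma\backslash G$ and $\Gamma'\backslash G$ and their universal covers apart, verifying the lift identity $h^{-1}\comp\pi' = p\comp L_{g_0}$, and substituting $(N')^\Q = g_0^{-1} N^\Q g_0$ from Remark \ref{explicitform} at the right moment; once these are in hand, the remaining steps are elementary coset manipulations.
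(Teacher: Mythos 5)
Your proposal is correct and follows exactly the route the paper intends: the paper gives no written proof beyond the remark that the corollary follows by combining Theorem \ref{fulldes} with Theorem \ref{topcon} and the explicit conjugation of Remark \ref{explicitform}, and your argument is precisely this combination, with the necessary details (passing to a power of $\bar{\alpha}$ to turn the periodic point into a fixed point, invoking Theorem \ref{reduc} to descend from the nilmanifold cover $N'\backslash G$, and the cancellation $g_0 g_0^{-1}$ in the coset bookkeeping) filled in correctly.
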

Note that $N^\Q$ in the statement is the radicable hull of the Fitting subgroup of $\Gamma$, which is in general different from the radicable hull of the infra-nilmanifold endomorphism $\bar{\delta}$ constructed by Theorem \ref{topcon}.

\smallskip
For periodic points there is a similar description. The only problem is that we do not know which points of $p(N^\Q)$ are periodic, since we only constructed a subset in Theorem \ref{perD}.

\begin{Thm}
\label{split}
Let $\bar{\delta}: N \backslash G \to N \backslash G$ be a nilmanifold endomorphism induced by the map $\delta \in \Aut(G)$ on the nilmanifold $ N \backslash G$ with projection map $p: G \to N \backslash G$. Let $H^\R$ be the subgroup of $G$ defined as $$H^\R = \{h \in G \mid \exists k > 0:  \delta^k (h) =h\}$$ and take $X = p(N^\Q) \cap \Per(\bar{\delta})$. Then $$\Per(\bar{\delta}) = p( p^{-1} \left(X \right) H^\R).$$
\end{Thm}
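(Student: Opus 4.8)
The plan is to prove the two inclusions of $\Per(\bar{\delta}) = p(p^{-1}(X) H^\R)$ separately, leaning on Theorem \ref{fulldes} for the eventually periodic points together with the power-replacement trick already used in its proof. The starting observation is that $\Per(\bar{\delta}) \subseteq \ePer(\bar{\delta}) = p(N^\Q H^\R)$ by Theorem \ref{fulldes}, so every periodic point can be written as $p(nh)$ with $n \in N^\Q$ and $h \in H^\R$. As in the proof of Theorem \ref{fulldes}, I would first replace $\delta$ by a suitable power so that $\delta(h) = h$ for every $h \in H^\R$; this is legitimate because, on the level of the Lie algebra, $H^\R$ is the sum of the genuine eigenspaces of $\delta$ for root-of-unity eigenvalues (see the remark following Theorem \ref{fulldes}), so $\delta$ has finite order there. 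One then checks that this replacement changes none of the relevant objects: $\Per(\bar{\delta}) = \Per(\bar{\delta}^r)$ for every $r > 0$, the set $H^\R$ is the same for $\delta$ and any of its powers, the group $N^\Q$ and its image $p(N^\Q)$ are intrinsic, and hence both $X = p(N^\Q) \cap \Per(\bar{\delta})$ and its preimage $p^{-1}(X)$ are unchanged.

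For the inclusion $\Per(\bar{\delta}) \subseteq p(p^{-1}(X) H^\R)$, I would take a periodic point $p(g)$ and write $g = nh$ with $n \in N^\Q$ and $h \in H^\R$ as above. Periodicity gives a $k > 0$ with $N\delta^k(g) = Ng$, that is $\delta^k(g) = mg$ for some $m \in N$. Expanding $\delta^k(nh) = \delta^k(n)\delta^k(h) = \delta^k(n) h$ (using $\delta(h) = h$) and comparing with $\delta^k(g) = mnh$ yields $\delta^k(n) = mn$. Since $m \in N$, this says $N\delta^k(n) = N(mn) = Nn$, i.e.\ $\bar{\delta}^k(p(n)) = p(n)$, so $p(n)$ is periodic. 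Hence $p(n) \in X$, so $n \in p^{-1}(X)$ and $g = nh \in p^{-1}(X) H^\R$, as required. The converse inclusion runs the very same computation forward: for $g' \in p^{-1}(X)$ (so $p(g')$ is periodic, with $N\delta^k(g') = Ng'$) and any $h \in H^\R$, one gets $\bar{\delta}^k(p(g'h)) = N\delta^k(g')\delta^k(h) = N\delta^k(g') h = Ng'h = p(g'h)$, so $p(g'h)$ is periodic.

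There is no deep obstacle here; the real content is that the $H^\R$-component, once it is genuinely fixed by $\delta$, simply rides along, so that periodicity of $p(nh)$ is equivalent to periodicity of the $N^\Q$-part $p(n)$. The only point demanding care is that $X$ is not given by an explicit formula — unlike $\ePer(\bar{\delta})$, we have no description of which points of $p(N^\Q)$ are periodic — so the argument must be carried out purely formally in terms of $X$ and $p^{-1}(X)$, and the reduction to $\delta|_{H^\R} = \I$ is exactly what makes the coset manipulation $N\delta^k(n) = N(mn) = Nn$ close up cleanly in both directions.
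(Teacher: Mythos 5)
Your proof is correct and follows essentially the same route as the paper's: after replacing $\delta$ by a power fixing $H^\R$ pointwise, you decompose a periodic point as $p(nh)$ via Theorem \ref{fulldes}, and your computation $\delta^k(n)=mn$, hence $N\delta^k(n)=Nn$, is exactly the paper's $\delta^k(g)g^{-1}=\delta^k(n)n^{-1}\in N$, with the converse inclusion run as in Theorem \ref{fulldes}. Your only addition is the (welcome) explicit check that the power-replacement leaves $\Per(\bar{\delta})$, $H^\R$, $X$ and $p^{-1}(X)$ unchanged, which the paper leaves implicit.
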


\begin{proof}
By taking some power of $\delta$, we can assume that $\delta(h) = h$ for all $h \in H^\R$. Let $p(g)$ be a periodic point of $\bar{\delta}$. Since every periodic point is eventually periodic, we can assume that $g$ is of the form $g = n h$ with $ n \in N^\Q$ and $h \in H^\R$ by Theorem \ref{fulldes}. If $p(g)$ is periodic, we know that $\delta^k(g) g^{-1} \in N$ for some $k > 0$. Since $$  \delta^k(g) g^{-1} = \delta^k(n) \delta^k(h) h^{-1} n^{-1} = \delta^k(n) n^{-1} \in N$$ this implies that $p(n)$ is a periodic point, so $p(n) \in X$. We conclude that $\Per(\bar{\delta}) \subseteq p( p^{-1} \left(X \right) H^\R)$. 

The other inclusion is identical as in Theorem \ref{fulldes}.
\end{proof}

Again, a combination of Theorem \ref{split} and Theorem \ref{topcon} gives us the following result.

\begin{Cor}
\label{fulldesper}
Let $\bar{\alpha}: \Gamma \backslash G \to \Gamma \backslash G$ be an affine infra-nilmanifold endomorphism induced by the affine transformation $\alpha = (g, \delta) \in \Aff(G)$ and assume that $\bar{\alpha}$ has a periodic point $\Gamma g_0$. Denote by $p: G \to \Gamma \backslash G$ the projection map and by $H^\R$ the subgroup of $G$ defined as $$H^\R =\{h \in G \mid \exists k > 0:  \delta^k (h) =h\}$$ and take $X = p(N^\Q) \cap \Per(\bar{\delta})$ where $\bar{\delta}$ is the nilmanifold endomorphism induced by $\delta$ on $g_0^{-1} N g_0$. The periodic points of $\bar{\alpha}$ are equal to $$\Per (\bar{\alpha}) = p(p^{-1}(X) g_0  H^\R).$$
\end{Cor}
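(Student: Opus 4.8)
The plan is to transport the description of $\Per(\bar{\delta})$ supplied by Theorem \ref{split} back along the topological conjugacy of Theorem \ref{topcon}, in exactly the way the eventually periodic case was handled in the preceding corollary. The only substantive change is that Theorem \ref{split} replaces Theorem \ref{fulldes}, so that the nilmanifold factor $p(N^\Q)$ is cut down to its periodic part $X$.

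First I would reduce to the case of a fixed point. Since $\Per(\bar{\alpha}) = \Per(\bar{\alpha}^m)$ for every $m>0$ and $\Gamma g_0$ is periodic, I may pass to a power $\bar{\alpha}^m$ for which $\Gamma g_0$ is fixed. This leaves the right-hand side untouched: the subgroup $H^\R$ is defined by the requirement that \emph{some} power of $\delta$ fix $h$, and so is unchanged when $\delta$ is replaced by $\delta^m$, while $X = p(N^\Q)\cap\Per(\bar{\delta})$ is unchanged because $\Per(\bar{\delta}) = \Per(\bar{\delta}^m)$. Hence I assume from now on that $\Gamma g_0$ is a fixed point.

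Next I would apply Theorem \ref{topcon} together with the explicit form recorded in Remark \ref{explicitform}. This produces the homeomorphism $h\colon \Gamma\backslash G \to \Gamma'\backslash G$ with $\Gamma' = g_0^{-1}\Gamma g_0$ and $h(\Gamma x) = \Gamma' g_0^{-1}x$, whose lift to $G$ is $L_{g_0^{-1}}$, conjugating $\bar{\alpha}$ to the infra-nilmanifold endomorphism $\bar{\delta}'$ on $\Gamma'\backslash G$ induced by $\delta$. By Proposition \ref{basictopcon} this gives $\Per(\bar{\alpha}) = h^{-1}(\Per(\bar{\delta}'))$, where $h^{-1}$ has lift $L_{g_0}$, so that $h^{-1}(\Gamma' y) = p(g_0 y)$. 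I then invoke Theorem \ref{reduc} to pass from $\bar{\delta}'$ on $\Gamma'\backslash G$ to the nilmanifold endomorphism $\bar{\delta}$ induced by $\delta$ on $N'\backslash G$, where $N' = g_0^{-1}N g_0$ is the Fitting subgroup of $\Gamma'$; this identifies $\Per(\bar{\delta}')$ with the image of $\Per(\bar{\delta})$ under the finite cover. Finally Theorem \ref{split} describes $\Per(\bar{\delta})$ on $N'\backslash G$ in terms of $(N')^\Q = g_0^{-1}N^\Q g_0$, its periodic part, and $H^\R$.

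Composing these steps and using the identity $g_0 (N')^\Q = N^\Q g_0$ — the same identity that places $g_0$ in the middle of the eventually periodic formula — yields $\Per(\bar{\alpha}) = p(p^{-1}(X) g_0 H^\R)$. The main obstacle is purely one of bookkeeping: unlike the eventually periodic case, where \emph{all} of $p(N^\Q)$ is eventually periodic, here only the subset $X$ is periodic, so I must track $X$ faithfully through the finite covering $N'\backslash G \to \Gamma'\backslash G$ and through the translation by $g_0$, verifying that the periodic part of $(N')^\Q$ on the nilmanifold side corresponds precisely to $p^{-1}(X)$ after applying $L_{g_0}$. Once this correspondence is checked, the remaining inclusions are immediate, since all the genuine work — the injectivity of $\bar{\delta}$ on the relevant finite sets and the splitting along $H^\R$ — has already been carried out in Theorem \ref{split}.
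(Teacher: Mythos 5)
Your proposal is correct and takes essentially the same route as the paper, which offers no written proof beyond the remark that the corollary follows by combining Theorem \ref{split} with Theorem \ref{topcon}: your chain --- pass to a power of $\bar{\alpha}$ so that $\Gamma g_0$ becomes a fixed point (noting $H^\R$ and $X$ are unchanged), conjugate via Theorem \ref{topcon} and Remark \ref{explicitform}, lift to the nilmanifold cover $N^\prime \backslash G$ with $N^\prime = g_0^{-1}Ng_0$ by Theorem \ref{reduc}, apply Theorem \ref{split}, and transport back using $g_0 (N^\prime)^\Q = N^\Q g_0$ --- is exactly that combination. The bookkeeping step you defer (tracking $X$ through the finite cover and the translation $L_{g_0}$) is likewise the only step the paper leaves implicit.
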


Theorem \ref{split} shows us that for a full description of $\Per(\bar{\alpha})$, we only need to know how the set $X = \Per(\bar{\delta}) \cap p(N^\Q)$ looks like. In Section \ref{periodexam} we give some examples showing that this set can be quite wild. The only information we have for general nilmanifold endomorphisms up till now is that $p(N^\Q_D) \subseteq X$. The following question is therefore natural.

\begin{QN}
\label{QN1}
Is there a description of $\Per(\bar{\alpha})$ for affine infra-nilmanifold endomorphisms $\bar{\alpha}: \Gamma \backslash G \to \Gamma \backslash G$? Equivalently, is there a description of the periodic points of nilmanifold endomorphisms which lie in the set $p(N^\Q)$?
\end{QN}
 
For infra-nilmanifold automorphisms, which have determinant $\pm 1$, we know that $\Per(\bar{\delta}) \supseteq p(N^\Q_1) = p(N^\Q)$ by Theorem \ref{perD}. This implies that for every affine infra-nilmanifold automorphism $\bar{\alpha}$ we have that $\Per(\bar{\alpha}) = \ePer(\bar{\alpha})$ by Theorem \ref{split}. This behavior completely describes these affine infra-nimanifold automorphisms.

\begin{Thm}
Let $\bar{\alpha}$ be an affine infra-nilmanifold endomorphism. The map $\bar{\alpha}$ is an affine infra-nilmanifold automorphism (so $\bar{\alpha}$ is a diffeomorphism) if and only if $\Per(\bar{\alpha}) = \ePer(\bar{\alpha}) \neq \emptyset$.
\end{Thm}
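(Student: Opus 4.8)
The key observation is that $\bar{\alpha}$ is a diffeomorphism precisely when $\vert \det \delta \vert = 1$: writing $\alpha = (g,\delta)$, the map $\bar{\alpha}$ is a self-covering of $\Gamma \backslash G$ whose degree equals the index $[N:\delta(N)] = \vert \det \delta \vert$, and a covering is a diffeomorphism exactly when its degree is $1$ (for a nilmanifold endomorphism $\bar{\delta}$ this is the content already visible in Example~\ref{expand}, where the non-injectivity had size $\vert\det(\delta)\vert$). So my plan is to translate ``$\bar{\alpha}$ is an automorphism'' into ``$\vert\det\delta\vert=1$'' and prove each implication through the reduction of Theorems~\ref{topcon} and~\ref{reduc} to a nilmanifold endomorphism $\bar{\delta}$.

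For the forward implication, assume $\bar{\alpha}$ is an automorphism. A diffeomorphism is bijective, so if $\bar{\alpha}^{k}(x)$ is periodic then applying $\bar{\alpha}^{-k}$ shows $x$ itself is periodic; hence $\ePer(\bar{\alpha}) \subseteq \Per(\bar{\alpha})$ and the two sets coincide. To obtain nonemptiness I would pass, after replacing $\bar{\alpha}$ by a power to produce a fixed point, to the conjugate nilmanifold endomorphism $\bar{\delta}$, whose determinant satisfies $\vert\det\delta\vert = 1$. Then $N^\Q_D = N^\Q$, so Theorem~\ref{perD} gives $p(N^\Q) \subseteq \Per(\bar{\delta})$, a set containing the fixed coset $p(e)$ and therefore nonempty; transporting back through the finite cover (Theorem~\ref{reduc}) and the conjugacy (Proposition~\ref{basictopcon}) yields $\Per(\bar{\alpha}) = \ePer(\bar{\alpha}) \neq \emptyset$.

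For the converse, assume $\Per(\bar{\alpha}) = \ePer(\bar{\alpha}) \neq \emptyset$ and suppose for contradiction that $\bar{\alpha}$ is not an automorphism, i.e.\ $\vert\det\delta\vert > 1$. Since a periodic point exists, I replace $\bar{\alpha}$ by a suitable power so that it has a fixed point and reduce to a nilmanifold endomorphism $\bar{\delta}$; the equality $\Per = \ePer$ is preserved under taking powers and under both the conjugacy and the finite cover of Theorems~\ref{topcon} and~\ref{reduc}. The fixed coset $Ne$ then has exactly $[\delta^{-1}(N):N] = [N:\delta(N)] = \vert\det\delta\vert$ preimages under $\bar{\delta}$, using that $\delta$ is injective on $G$ and $\delta(\delta^{-1}(N)) = N$. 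As $\vert\det\delta\vert > 1$, there is a coset $Ng_0 \neq Ne$ with $\bar{\delta}(Ng_0) = Ne$. This $Ng_0$ is eventually periodic but not periodic, since $\bar{\delta}^{m}(Ng_0) = Ne \neq Ng_0$ for every $m \geq 1$, contradicting $\Per(\bar{\delta}) = \ePer(\bar{\delta})$. Hence $\vert\det\delta\vert = 1$ and $\bar{\alpha}$ is an automorphism.

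The main obstacle is the nonemptiness claim in the forward implication: the bijectivity argument yields $\Per = \ePer$ for free, but asserting that this common set is nonempty forces one back to the reduction of Theorem~\ref{topcon}, which itself presupposes that $\bar{\alpha}$ possesses a periodic point. The cleanest route is to observe that once one has reduced to a nilmanifold endomorphism the origin $p(e)$ is automatically fixed, so the delicate point is really guaranteeing that the reduction applies at all; in the converse direction the corresponding care lies in verifying that the equality $\Per = \ePer$ genuinely transfers through both the covering map and the topological conjugacy before the preimage count of the fixed point can be invoked.
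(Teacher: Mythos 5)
Your converse implication is, in substance, the paper's own proof. After the same reduction (take a power with a fixed point, apply Theorems \ref{topcon} and \ref{reduc}) the paper also works with a nilmanifold endomorphism $\bar{\delta}$ of determinant $\vert D \vert > 1$ and considers the lattice $N_0 = \delta^{-1}(N)$: the finite set $p(N_0)$, of cardinality $[N_0 : N] = \vert D \vert$, is exactly the preimage $\bar{\delta}^{-1}(Ne)$ that you count, and since $\bar{\delta}$ maps all of it to the single point $Ne$, its only periodic point is $Ne$ while every point of it is eventually periodic. So that direction is correct and identical in approach. In the forward direction, your observation that injectivity of $\bar{\alpha}$ alone forces $\ePer(\bar{\alpha}) \subseteq \Per(\bar{\alpha})$ is actually more elementary than the paper's route, which deduces the equality from Theorem \ref{perD} (with $D = \pm 1$) combined with Theorem \ref{split}.

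The obstacle you flag, nonemptiness in the forward implication, is a genuine gap in your write-up: the step ``replacing $\bar{\alpha}$ by a power to produce a fixed point'' presupposes a periodic point, which is exactly what is to be produced, and you correctly sense that the reduction of Theorem \ref{topcon} cannot supply it. But you should be aware that this gap is unfixable, because the forward implication as literally stated is false. The paper's own Example \ref{noperiod}, translation by $b \in \R^n \setminus \Q^n$ on the torus, is an affine infra-nilmanifold automorphism (a diffeomorphism), yet $\Per(\bar{\alpha}) = \ePer(\bar{\alpha}) = \emptyset$, so the condition $\Per(\bar{\alpha}) = \ePer(\bar{\alpha}) \neq \emptyset$ fails. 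The paper's proof silently shares this lacuna: for the ``only if'' direction it cites only the discussion preceding the theorem, which establishes the equality $\Per(\bar{\alpha}) = \ePer(\bar{\alpha})$ for automorphisms but says nothing about nonemptiness. The statement should be read as: among affine infra-nilmanifold endomorphisms possessing a periodic point, the automorphisms are precisely those with $\Per(\bar{\alpha}) = \ePer(\bar{\alpha})$. Under that reading, your proof, with the circular power-taking step deleted from the forward direction (where it is not needed), is complete and correct.
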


\begin{proof}
One direction was shown just above the theorem. For the other direction, it follows from Proposition \ref{topcon} and Theorem \ref{reduc} that it suffices to show this  in the case of a nilmanifold endomorphism $\bar{\delta}$. Assume that $\bar{\delta}$ is not an nilmanifold automorphism, so its determinant $D$ satisfies $\vert D \vert > 1$. Let $N \backslash G$ be the nilmanifold and consider the uniform lattice $N_0 = \delta^{-1}(N) \le G$ of $G$. Then $N$ is a finite index subgroup of $N_0$ of index $\vert D \vert$.

Consider the set $p(N_0)$ which is a set of order $\vert D \vert$. After applying the map $\bar{\delta}$ we get that $$\bar{\delta}\left(p\left(N_0\right)\right) = p\left(\delta\left(N_0\right)\right) = p(N)$$ and hence the only periodic point in $p(\tilde{N})$ is $N e$. Since $\vert D \vert > 1$, this implies that there exists an eventually periodic point which is not periodic.
\end{proof}

\section{Examples}
\label{periodexam}

In this section we compute the set of periodic points for some concrete examples of affine infra-nilmanifold endomorphisms. These examples demonstrate that in general it is a hard task to describe the set of periodic points, contrary to the eventually periodic points which are completely described in Theorem \ref{fulldes}.

For abelian groups, the relative order of elements in $N^\Q$ also helps to show that points are not periodic.

\begin{Prop}
\label{notper}
Let $\T^n = \Z^n\backslash \R^n$ be an $n$-torus with projection map $p: \R^n \to \Z^n$ and let $f_A$ be a toral endomorphism induced by the matrix $A \in \GL(n,\Q)$. Consider a periodic point $p(q)$ with $q \in \Q^n$, then $$\ordz_{\Z^n}\left(A^k(q)\right) = \ordz_{\Z^n}(q)$$ for all $k >0$.
\end{Prop}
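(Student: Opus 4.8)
The plan is to track how the single integer $\ordz_{\Z^n}(A^k q)$ evolves as $k$ increases and to show that this sequence of positive integers is simultaneously non-increasing (under application of $A$) and periodic (because $p(q)$ is periodic), which forces it to be constant.

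First I would record that, since $f_A$ is a well-defined toral endomorphism, $A$ maps $\Z^n$ into itself and is therefore an integer matrix (its columns are $A e_i \in \Z^n$). Writing $d_k = \ordz_{\Z^n}(A^k q)$, the first key step is monotonicity: with $r = A^k q$ and $m = \ordz_{\Z^n}(r)$ we have $m \cdot A r = A(m r) \in A(\Z^n) \subseteq \Z^n$, so $\ordz_{\Z^n}(A r) \mid m$, that is $d_{k+1} \mid d_k$. This is exactly the special case ($\varphi = A$, $H = \Z^n$) of the observation from Section~3 that a subgroup\-/preserving morphism can only decrease the relative order. In particular $d_{k+1} \le d_k$, so $(d_k)_{k \ge 0}$ is non-increasing.

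Next I would exploit periodicity. Since $p(q)$ is periodic there is $N > 0$ with $A^N q \equiv q \pmod{\Z^n}$, say $A^N q = q + v$ with $v \in \Z^n$. Applying $A^k$ and using $A^k v \in \Z^n$ gives $A^{k+N} q \equiv A^k q \pmod{\Z^n}$. Because the relative order with respect to $\Z^n$ is unchanged by adding a lattice vector — if $A^{k+N} q = A^k q + w$ with $w \in \Z^n$, then $m A^{k+N} q \in \Z^n$ if and only if $m A^k q \in \Z^n$ — this yields $d_{k+N} = d_k$ for every $k$, so the sequence $(d_k)$ is periodic with period $N$. A non-increasing periodic sequence of positive integers is constant: the chain $d_0 \ge d_1 \ge \cdots \ge d_N = d_0$ collapses, and periodicity propagates the equality to all $k$. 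Hence $d_k = d_0 = \ordz_{\Z^n}(q)$ for all $k > 0$, which is the claim.

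I do not expect a genuine obstacle here; the only point requiring a little care is that $\ordz_{\Z^n}$ is defined even though $\Z^n$ is not of finite index in $\R^n$. One simply works inside the finitely generated group $\tfrac{1}{d}\Z^n$ with $d = \ordz_{\Z^n}(q)$, which contains every $A^k q$ (as $A$ is integral) and in which $\Z^n$ has finite index, and where the value of $\ordz_{\Z^n}$ is insensitive to the choice of overlying group.
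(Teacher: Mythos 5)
Your proof is correct, and it takes a mildly but genuinely different route from the paper's. The paper argues by contradiction via an invariant set: if $s = \ordz_{\Z^n}(A^k q)$ were a proper divisor of $\ordz_{\Z^n}(q)$, then $f_A^k(p(q))$ lies in $p(X_s)$ with $X_s = \{x \in \Q^n \mid \ordz_{\Z^n}(x) \mid s\}$, a set invariant under $f_A$ and not containing $p(q)$ --- here the fact that $X_s$ is a \emph{subgroup} of $\Q^n$ containing $\Z^n$ is what guarantees $p(q) \in p(X_s)$ if and only if $q \in X_s$ --- so the orbit can never return to $p(q)$, contradicting periodicity. You instead track the integer sequence $d_k = \ordz_{\Z^n}(A^k q)$ directly: non-increasing because $d_{k+1} \mid d_k$ (the same key divisibility that makes the paper's $X_s$ invariant), and $N$-periodic because the relative order modulo $\Z^n$ is unchanged by lattice translations, hence constant. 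Your version is more elementary, avoids the quotient-membership argument entirely, and even records the divisibility chain explicitly; the paper's invariant-subgroup formulation, on the other hand, is the one that survives (in the weakened prime-divisor form of Proposition \ref{computeper}) in the nonabelian nilmanifold setting, precisely because there the relative order $\ordn$ is \emph{not} invariant under translation by elements of $N$ --- which is the one step of your argument that is specifically abelian. Your closing remark, working inside $\tfrac{1}{d}\Z^n$ so that $\ordz_{\Z^n}$ is well defined, matches the paper's own convention for relative orders in the radicable hull and is handled correctly.
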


\begin{proof}
Note that for abelian groups the set $$X_s = \left\{q \in \Q^n \suchthat \hspace{1mm} \ordz_{\Z^n}(q) \mid s\right\} \subseteq \Q^n$$ does form a subgroup of $\Q^n$. Because $\Z^n \le X_s$, we have that $p(q) \in p(X_s)$ if and only if $q \in X_s$. Assume that $s = \ordz_{\Z^n}\left(A^k(q)\right) < \ordz_{\Z^n}(q)$ for some $q \in \Q^n$. Then $p(q) \notin p(X_s)$ but $f_A^k\left(p\left(q\right)\right) \in p(X_s)$ and this last set is invariant under $f_A$. This implies that $p(q)$ is not periodic.
%
%
\end{proof}

For general nilpotent groups $N$, there is the following weaker version of this proposition.

\begin{Prop}
\label{computeper}
Let $\bar{\delta}$ be a nilmanifold endomorphism on the nilmanifold $N \backslash G$ induced by $\delta \in \Aut(G)$. Denote by $p: G \to N \backslash G$ the natural projection map and let $p(n)$ be a periodic point for $n \in N^\Q$. If $q$ is a prime such that $q \mid \ordn(n)$, then $q \mid \ordn(\delta^k(n))$ for all $k > 0$.
\end{Prop}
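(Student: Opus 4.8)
The plan is to localise the relative order at the prime $q$ and to exploit the fact that, although the set of points with $q \mid \ordn(\cdot)$ is badly behaved under left translation by $N$, its complement is a genuine subgroup. Concretely, I would introduce
$$X = \left\{g \in N^\Q \suchthat \gcd(\ordn(g), q) = 1\right\}$$
and first prove that $X$ is a subgroup of $N^\Q$ containing $N$. The point is that $X = \bigcup_{\gcd(s,q) = 1} N^{\frac{1}{s}}$: every element of $X$ with relative order $s$ lies in $N^{\frac 1s}$, and each such $N^{\frac 1s}$ sits inside $X$ by the locality statement below. Closure under inverses is clear since $\ordn(g^{-1}) = \ordn(g)$, and closure under products follows because if $g,h \in X$ then both lie in $N^{\frac{1}{\mathrm{lcm}(\ordn(g),\ordn(h))}}$, a subgroup whose exponent is again coprime to $q$.

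The locality statement I need is: if $\gcd(s,q) = 1$, then every $x \in N^{\frac{1}{s}}$ already satisfies $\gcd(\ordn(x),q) = 1$. Here lies the main work, and I would argue by contradiction. By Proposition \ref{stothek} the index $[N^{\frac{1}{s}} : N]$ divides $s^k$ and is in particular coprime to $q$. If some $x \in N^{\frac 1s}$ had $q \mid \ordn(x)$, then $y = x^{\ordn(x)/q}$ would satisfy $\ordn(y) = q$. Passing to the finite nilpotent quotient $\faktor{N^{\frac 1s}}{K}$ by the normal core $K = \mathrm{Core}_{N^{1/s}}(N)$, I would decompose this group as the direct product of its Sylow subgroups and write $\bar y = \prod_\ell \bar y_\ell$ with $\bar y_\ell \in \mathrm{Syl}_\ell$. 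Since $\bar y^q$ lies in the image $\bar N$ of $N$ while $\bar y$ does not, for each prime $\ell \neq q$ the relation $\bar y_\ell^{\,q} \in \bar N \cap \mathrm{Syl}_\ell$ together with $\gcd(q,|\mathrm{Syl}_\ell|)=1$ forces $\bar y_\ell \in \bar N$; hence the entire discrepancy sits in the $q$-Sylow, so $\bar N \cap \mathrm{Syl}_q \subsetneq \mathrm{Syl}_q$ and $q \mid [N^{\frac 1s}:N]$, a contradiction. The obstacle is exactly this step: because $N$ need not be normal in $N^{\frac 1s}$ one cannot detect $\ordn$ in a quotient group, and the Sylow decomposition of the finite nilpotent quotient is what compensates for the missing normality.

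Granting that $X$ is a subgroup, the remainder is soft. Since $\ordn(\delta(g)) \mid \ordn(g)$ we have $\delta(X) \subseteq X$, so $p(X)$ is a forward $\bar\delta$-invariant subset of $N \backslash G$. Because $N \le X$ and $X$ is a subgroup we have $NX = X$, so $q \mid \ordn(n)$ forces $n \notin NX$ and therefore $p(n) \notin p(X)$. Suppose now, for contradiction, that $q \nmid \ordn(\delta^K(n))$ for some $K$; then $\delta^K(n) \in X$, whence $\bar\delta^K(p(n)) = p(\delta^K(n)) \in p(X)$, and by forward invariance the whole tail $\{\bar\delta^j(p(n)) \mid j \ge K\}$ remains in $p(X)$.

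Finally I would feed in periodicity: if $\bar\delta^P(p(n)) = p(n)$, then $\bar\delta^{mP}(p(n)) = p(n)$ for every $m$, and choosing $m$ with $mP \ge K$ yields $p(n) = \bar\delta^{mP}(p(n)) \in p(X)$, contradicting $p(n) \notin p(X)$. Therefore $q \mid \ordn(\delta^k(n))$ for all $k > 0$. This parallels the abelian Proposition \ref{notper}: the forward-invariant subgroup $p(X)$ plays the role there taken by the $f_A$-invariant subgroup $p(X_s)$, the difference being that in the nonabelian setting only the prime-by-prime statement survives, precisely because $X_s$ fails to be a subgroup while its $q$-localised analogue $X$ does not.
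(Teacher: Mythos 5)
Your proof is correct, and its dynamical core is the same as the paper's: exhibit a $\bar\delta$-invariant subset of the nilmanifold of the form $p(\text{subgroup containing }N)$ that contains some forward iterate of $p(n)$ but not $p(n)$ itself, then let periodicity force the orbit back into the excluded point. The packaging differs in two ways. The paper works with the single subgroup $N^{\frac{1}{s}}$ for $s=\ordn(\delta^k(n))$ and obtains $n\notin N^{\frac{1}{s}}$ in one line from Proposition \ref{index2}: applied to the generating set $X_s$ of $N^{\frac{1}{s}}$, that proposition says exactly your ``locality statement'' ($\ordn(x)\mid s^m$ for all $x\in N^{\frac{1}{s}}$, hence coprime to $q$). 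You instead assemble the $q$-localised union $X=\bigcup_{\gcd(s,q)=1}N^{\frac{1}{s}}$ into a subgroup and prove locality by contradiction from Proposition \ref{stothek}, via the normal core and the Sylow decomposition of the finite nilpotent quotient. That argument is sound (the core has finite index, $\bar N$ is the direct product of its intersections with the Sylow subgroups, and the discrepancy of $\bar y$ is pushed into the $q$-Sylow, making $q$ divide the index), but note that it runs the paper's logic backwards: Proposition \ref{stothek} is itself deduced from Proposition \ref{index2} and Lemma \ref{cauchy}, so you are recovering the order statement from its own corollary. Citing \ref{index2} directly collapses the locality step, and with it the need for $X$ to be a subgroup at all: for $p(n)\notin p(X)$ it already suffices that $X$ is a union of subgroups each containing $N$, and for the invariant-set argument the single orbit-dependent $N^{\frac{1}{s}}$ suffices, which is what the paper uses. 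What your formulation buys is a cleaner object --- one forward-invariant $q$-localised subgroup independent of the orbit, and an argument that works knowing only the index statement --- at the cost of an extra layer of finite group theory that the paper's shorter proof avoids.
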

\begin{proof}
Assume that there exists a prime $q$ and a $k>0$ such that $q \mid \ordn(n)$, but $q \nmid \ordn(\delta^k(n))$. Write $s =\ordn(\delta^k(n))$, then we have that $n \notin N^{\frac{1}{s}}$ by Proposition \ref{index2}. Hence $p(n) \notin p(N^{\frac{1}{s}})$ since $N^{\frac{1}{s}}$ is a subgroup of $N^\Q$. The point $\bar{\delta}^k(p(n)) \in p(N^{\frac{1}{s}})$ and because $\bar{\delta}$ maps the set $p(N^{\frac{1}{s}})$ to itself, the point $p(n)$ cannot be periodic.
\end{proof}

Both Propositions \ref{notper} and \ref{computeper} are useful for showing that certain points are not periodic. We will apply these results in the following examples. In Theorem \ref{perD}, we showed that $p(N_D^\Q)$ is a subset of the periodic points for an infra-nilmanifold endomorphism of determinant $D$. In some cases, the set of periodic points will be equal to the subset $p(N_D^\Q)$.

\begin{Ex}
Let $\lien^\Q$ be a nilpotent Lie algebra which has a positive grading, so $$\lien^\Q = \bigoplus_{i > 0} \lien_i^\Q$$ with $[\lien_i^\Q,\lien_j^\Q] \subseteq \lien_{i+j}^\Q$ for all $i,j > 0$. For every prime $p$, there exists a Lie algebra automorphism $\varphi_p: \lien^\Q \to \lien^\Q$ given by $\varphi_p(X) = p^i X$ for all $X \in \lien^\Q_i$. Denote by $N^\Q$ the radicable nilpotent group corresponding to $\lien^\Q$. From \cite[Corollary 3.3]{dd14-1} it follows that there exists a full subgroup $N$ of $N^\Q$ such that $\varphi_p$ induces a group morphism $\psi_p: N \to N$. 

For every $n \in N^\Q$ with $p \mid \ordn(n)$, we have that $\ordn(\psi_p(n)) \mid \frac{\ordn(n)}{p}$. So for some power $\psi_p^k$ we have that $\gcd(\ordn(\psi_p^k(n)),p) = 1$. This implies that $\Per(\bar{\psi}_p) = p(N_p^\Q)$ and thus equality holds in Theorem \ref{perD}. In this case, $\ePer(\bar{\psi}_p) \setminus \Per(\bar{\psi}_p)$ is a dense subset of the nilmanifold.
\end{Ex}

Similarly there is a closed formula for the periodic points of toral endomorphisms induced by diagonal matrices on $\T^n$ by using Proposition \ref{notper} in the same way. In the case of nilmanifold endomorphisms which are not induced by a diagonal matrix, the situation gets a lot more complicated. 
\begin{Ex}
\label{exampl}
Consider the $2$-torus $\T^2 = \Z^2 \backslash \R^2$ with natural projection map $p: \R^2 \to \T^2$. Denote the nilmanifold endomorphism induced by a matrix $A \in \GL(2,\Q)$ as $f_A: \T^2 \to \T^2$. We will compute $\Per(f_A)$ for the matrices $$A_1 = \begin{pmatrix} 3 & 1 \\ 1 & 1 \end{pmatrix}, \hspace{1mm} A_2 = \begin{pmatrix} 4 & 1 \\ 2 & 1 \end{pmatrix},\hspace{1mm}  A_3 = \begin{pmatrix} 1 & 3 \\ -1 & 1 \end{pmatrix} \text{ and } \hspace{1mm}  A_4 = \begin{pmatrix} 5 & 2 \\ -1 & 1 \end{pmatrix}.$$ Since for every $k > 0$, $A^k_i - I$ is invertible over $\Q$, we know that $\ePer(f_{A_i}) = p(\Q^2)$ in all four cases. This implies that the periodic points also form a subset of $p(\Q^2)$. Note that the first two matrices are hyperbolic (so the maps $f_{A_i}$ are Anosov endomorphisms in these cases) whereas the last two matrices induce expanding maps. 
\begin{enumerate}
\item[\fbox{$A_1$}] For the first example, we have $$\left(A_1\right)^2 = \begin{pmatrix} 10 & 4 \\ 4 & 2 \end{pmatrix} = 2 \begin{pmatrix} 5 & 2 \\2 & 1 \end{pmatrix}$$ with $\det \left( \begin{pmatrix}  5 & 2 \\2 & 1 \end{pmatrix} \right) = 1$. Therefore, if $q \in \Q^2$ satisfies $2 \mid \ordt(q)$ then $\ordt\left(\left(A_1\right)^2\left(q\right)\right) = \frac{\ordt{q}}{2}$ and thus $p(q)$ is not periodic by Proposition \ref{notper}. By Theorem \ref{perD} we conclude that $$\Per(f_{A_1}) = p(N^\Q_2).$$
\item[\fbox{$A_2$}] In the second case, we claim that $\ePer(f_{A_2}) \setminus \Per(f_{A_2}) \supseteq p(X)$ with $X \subseteq \Q^2$ given by $$X = \left\{\left(\frac{q_1}{2^k},\frac{q_2}{2^l}\right) \mid q_1, q_2 \in \Q, \hspace{1mm} k > l \geqslant 0, \hspace{1mm}  \gcd(\ordz_{\Z}(q_i),2) = 1 \right\}.$$ Note that the set $X^c$ is invariant under $A_2$. To see that $p(X) \cap \Per(f_{A_2}) = \emptyset$, write $$A_2 = \begin{pmatrix} 4 & 1 \\ 2 & 1 \end{pmatrix} = \begin{pmatrix} 2 & 1 \\1 & 1 \end{pmatrix} \begin{pmatrix} 2 & 0 \\0 & 1\end{pmatrix}$$ where $\begin{pmatrix} 2 & 1 \\1 & 1 \end{pmatrix}$ has determinant $1$. If $x \in X$, then $\ordt\left(A_2(x) \right) = \frac{\ordt \left( x \right)}{2}$, so $p(x)$ is indeed not periodic by Proposition \ref{notper}. 

On the other hand, if $x = (q_1,q_2) \notin X$ with $2^k \mid \ordt(x)$, then $2^k \mid \ordz_{\Z}(q_2)$ and thus also $2^k \mid \ordz_{\Z}(2q_1 + q_2)$ which is the second component of $A_2(x)$. We conclude that if $x \notin X$, then $\ordt(x) = \ordt(A_2(x))$. So the points of relative order exactly $s$ in $X^c$ project to a (finite) set which is invariant under $f_{A_2}$. This implies by Lemma \ref{elem} that for every $s \in \N$, $f_{A_2}$ has a periodic point $p(x)$ with $\ordt(x) = s$. So $\Per(f_{A_2}) \supsetneq p(N^\Q_2)$.

\item[\fbox{$A_3$}] For the third matrix, we have that $$\left(A_3\right)^2 = \begin{pmatrix} -2 & 6 \\ -2 & -2 \end{pmatrix} = 2 \begin{pmatrix} -1 & 3 \\ -1& -1 \end{pmatrix}$$ and thus similarly as in the case $A_1$, we conclude that $\Per(f_{A_3}) = p(N^\Q_2)$. 
\item[\fbox{$A_4$}] In the last example, we find that $\ePer(f_{A_4}) \setminus \Per(f_{A_4}) \supseteq p(Y)$ with $$Y = \left\{ \left(\frac{q_1}{7^k},\frac{q_2}{7^k}\right) \suchthat k > 0, \hspace{1mm} \gcd(\ordz_{\Z}(q_i),7) = 1 = \gcd\left(\ordz_{\Z}\left(\frac{q_1 - q_2}{7}\right),7\right) \right\}.$$ Also for every $s \in \N$, we have a periodic points $p(x)$ such that $\ordt(x)=s$. The proof is similar as in the case $A_2$. \end{enumerate}
\end{Ex}
\noindent So these examples show that Theorem \ref{perD} sometimes but not always gives us the set of periodic points, even not in the special case of Anosov endomorphisms or expanding maps. Note that in all examples above, the set of eventually periodic points which are not periodic form a dense subset of the infra-nilmanifold. 

\begin{QN}
For which affine infra-nilmanifold endomorphism $\bar{\alpha}: \Gamma \backslash G \to \Gamma \backslash G$ does it hold that $\ePer(\bar{\alpha}) \setminus \Per(\bar{\alpha})$ is a dense subset of $\Gamma \backslash G$?
\end{QN}

\noindent The only known examples which do not satisfy this property are the affine infra-nilmanifold automorphisms.

\section{Generalization to other types of self-maps}
\label{moregeneralself}
In this section we discuss the generalization of some results in this paper to a more general class of self-maps on infra-nilmanifolds. 

Consider the semi-group $\aff(G) = G \rtimes \Endo(G)$ of affine maps on $G$ which are not necessarily invertible, where $\Endo(G)$ denotes the semi-group of all continuous group morphisms $G \to G$. This semi-group also acts in a natural way on $G$ by extending the natural action of $\Aff(G)$ on $G$, so every $\alpha = (g,\delta) \in \aff(G)$ acts as \begin{eqnarray*}
{}^\alpha h = g \delta(h) \hspace{2.5mm} \forall h \in G.
\end{eqnarray*}
Let $\alpha \in \aff(G)$ be an affine map such that $\alpha \Gamma \subseteq \Gamma \alpha$, or equivalently such that for every $\gamma \in \Gamma$, there exists some $\gamma^\prime \in \Gamma$ with $\alpha \gamma = \gamma^\prime \alpha$. The map $\alpha$ will induce a map $$\bar{\alpha}: \Gamma \backslash G \to \Gamma \backslash G$$ given by $\bar{\alpha}(\Gamma g) = \Gamma {}^\alpha g$. The following result shows that, up to homotopy, we get all possible maps on infra-nilmanifolds.

\begin{Thm}[\cite{lee95-2}]
Let $\varphi: \Gamma \to \Gamma$ be a group morphism of an almost-crystallographic group $\Gamma$. Then there exists $\alpha \in \aff(G)$ such that $$\varphi(\gamma) \alpha = \alpha \gamma$$ for all $\gamma \in \Gamma$.
\end{Thm}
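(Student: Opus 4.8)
The plan is to follow the strategy of the generalized second Bieberbach theorem: build the Lie-group data of $\alpha = (d,D) \in \aff(G) = G \rtimes \Endo(G)$ first on the translation lattice by Mal'cev rigidity, and then propagate it over the finite holonomy group $F = \faktor{\Gamma}{N}$, solving a twisted translational equation by a finite-group averaging argument. Throughout, $N = \Gamma \cap G$ denotes the Fitting subgroup, a lattice in $G$, sitting in $1 \to N \to \Gamma \xrightarrow{p} F \to 1$.

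First I would isolate the part of $\varphi$ that sees only translations. Note that $\varphi(N)$ need not be contained in $N$ (a generator of $N$ may be sent to a holonomy element), so one cannot restrict $\varphi$ to $N$ directly. Instead set $N_0 = N \cap \varphi^{-1}(N) = \ker(p \comp \varphi|_N)$; since $p \comp \varphi|_N \colon N \to F$ has finite image, $N_0$ is a finite-index subgroup of $N$, hence again a lattice in $G$, and $\varphi$ maps $N_0$ into $N \subseteq G$. The Mal'cev rigidity statement recalled in Section \ref{intro} (a group morphism of a lattice extends uniquely to the ambient nilpotent Lie group) then applies to $\varphi|_{N_0} \colon N_0 \to G$ and yields a unique continuous endomorphism $D \in \Endo(G)$ with $D|_{N_0} = \varphi|_{N_0}$. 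This $D$ is the candidate linear part; crucially $\Endo(G)$ allows $D$ to be non-invertible, which is exactly what is needed when $\varphi$ is not injective.

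Next I would record the two equations that $\alpha = (d, D)$ must satisfy. Writing $\gamma = (a, A)$ and $\varphi(\gamma) = (b_\gamma, B_\gamma)$ in $G \rtimes F$, the required identity $\varphi(\gamma)\alpha = \alpha\gamma$ in $\aff(G)$ splits into an intertwining condition $B_\gamma \comp D = D \comp A$ in $\Endo(G)$ and a translational condition $b_\gamma\, B_\gamma(d) = d\, D(a)$ in $G$. The intertwining condition I would verify by a uniqueness argument: both $B_\gamma \comp D$ and $D \comp A$ are Lie endomorphisms agreeing with $\varphi$ after conjugating the lattice $N_0$ by $\gamma$, and by passing, if necessary, to a further finite-index subgroup of $N$ that is normal in $\Gamma$, the two extensions must coincide by the uniqueness clause of Mal'cev rigidity.

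The translational condition is the heart of the matter: rearranged it reads $d = b_\gamma\, B_\gamma(d)\, D(a)^{-1}$ for all $\gamma$, so I must produce a simultaneous fixed point of the family of affine maps $x \mapsto b_\gamma\, B_\gamma(x)\, D(a)^{-1}$. This family encodes a twisted $1$-cocycle for $F$ with values in the uniquely divisible group $G$, and solving for $d$ amounts to showing the cocycle is a coboundary. Here I would use that $F$ is finite while $G$, through its Mal'cev coordinates $\lie = \log G$, is uniquely divisible, so that the relevant cohomology of $F$ vanishes and an averaging over $F$ produces the sought translation $d$; this is the mechanism of the classical Bieberbach rigidity theorems, carried out layer by layer along the lower central series of $G$. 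The main obstacle is precisely this last step: because $\varphi$ may be non-injective and $\varphi(N) \not\subseteq N$, the holonomy part $B_\gamma$ of $\varphi(\gamma)$ is not controlled faithfully by $A$, so the averaging must be organised with respect to the (possibly degenerate) action induced through $D$ rather than the original holonomy representation, and one must confirm that the cocycle identity genuinely holds before invoking the vanishing of $H^1(F, -)$.
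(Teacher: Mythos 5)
First, a caveat on the comparison itself: the paper does not prove this theorem, it quotes it from \cite{lee95-2}, so your proposal can only be measured against the known argument. Your first two steps are sound: $N_0 = \ker(p \comp \varphi|_N)$ is indeed a finite-index sublattice of $N$ with $\varphi(N_0) \subseteq G$, its Mal'cev extension $D \in \Endo(G)$ exists and is unique, and the identity $\varphi(\gamma)\alpha = \alpha\gamma$ for $\alpha = (d,D)$ does split into the two conditions you state. The fatal step is the claim that $B_\gamma \comp D = D \comp A$ holds for \emph{this} $D$. Write $\mu_g \in \Aut(G)$ for conjugation by $g \in G$. Conjugation by $\gamma = (a,A)$ on the lattice is $\mu_a \comp A$, not $A$, and conjugation by $\varphi(\gamma) = (b_\gamma,B_\gamma)$ is $\mu_{b_\gamma} \comp B_\gamma$; so applying $\varphi$ to $\gamma n \gamma^{-1}$ for $n$ in a finite-index sublattice and invoking the uniqueness clause of Mal'cev rigidity yields only
\begin{equation*}
D \comp \mu_a \comp A \;=\; \mu_{b_\gamma} \comp B_\gamma \comp D, \qquad \text{equivalently} \qquad D \comp A \;=\; \mu_{D(a)^{-1} b_\gamma} \comp B_\gamma \comp D,
\end{equation*}
i.e.\ $D \comp A$ and $B_\gamma \comp D$ agree only up to an inner automorphism of $G$. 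For abelian $G$ (the flat-manifold case) this discrepancy vanishes and your outline is correct; for nonabelian $G$ the claim is simply false. Concretely, let $G$ be the Heisenberg group, $N = \langle x,y,z \mid [x,y]=z \text{ central} \rangle$ the integral lattice, $A \in \Aut(G)$ the order-two automorphism with $A(x)=x^{-1}$, $A(y)=y^{-1}$, $A(z)=z$, let $\Gamma = N \rtimes \langle A \rangle$ (an almost-crystallographic group), and let $\varphi$ be conjugation by $x$ inside $\Gamma$. Then $N_0 = N$ and $D = \mu_x$, but for $\gamma = (1,A)$ one has $B_\gamma = A$ and $B_\gamma \comp D = \mu_{x^{-1}} \comp A \neq \mu_x \comp A = D \comp A$, since $x^2 \notin Z(G)$. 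Because the condition $B_\gamma \comp D = D \comp A$ is \emph{necessary} for $\varphi(\gamma)\alpha = \alpha\gamma$ whatever the translational part is, no choice of $d$ can repair this: with your $D$ fixed, the cocycle equation in your last step is inconsistent, not merely delicate. (The theorem is of course true in this example: $\alpha = (x,\mathrm{id})$ works, and its linear part is $\mu_{x^{-1}} \comp D$, not $D$.)

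The phenomenon your argument misses is that $\varphi|_{N_0}$ determines the linear part of $\alpha$ only up to an inner automorphism: if $\alpha = (d,D')$ solves the problem, then $D' = \mu_{d^{-1}} \comp D$, so the linear and translational parts are coupled and cannot be found one after the other. A correct organization (essentially the known one) keeps your $D$ but changes the ansatz: using the displayed relation one checks that $\tau: \Gamma \to \Aff(G)$, $\tau(\gamma) = \left(D(a)^{-1} b_\gamma,\, B_\gamma\right)$, is a group homomorphism which is trivial on $N_0$, hence has finite image; every finite subgroup of $\Aff(G)$ fixes a point of $G$ (this is where the averaging over a finite group, performed layer by layer along the lower central series using unique divisibility, genuinely enters). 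A fixed point $d$ of $\tau(\Gamma)$ satisfies $b_\gamma B_\gamma(d) = D(a)\,d$, and then $\alpha = \left(d,\, \mu_{d^{-1}} \comp D\right)$ satisfies both conditions: the translational one is the fixed-point equation, and the linear one follows by combining the displayed relation with $B_\gamma(d)^{-1} = d^{-1} D(a)^{-1} b_\gamma$. So your instinct that a finite-group averaging argument closes the proof is right, but it must be applied to this finite affine group $\tau(\Gamma)$ rather than to a cocycle for $F$ with your uncorrected $D$, and the linear part must be conjugated by $d^{-1}$ at the end.
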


These self-maps form the natural generalization of affine infra-nilmanifold endomorphisms and the techniques of this paper can also be applied to study their periodic points. Since the linear part of a map $\alpha \in \aff(G)$ is not always invertible, there are some difficulties in generalizing the previous results, in particular the results about periodic points where we use the injective version of Lemma \ref{elem}.

\subsection*{Reduction to nilmanifolds}

Let $\Gamma \backslash G$ is an infra-nilmanifold and $\bar{\alpha}: \Gamma \backslash G \to \Gamma \backslash G$ a map induced by $\alpha \in \aff(G)$. Contrary to the situation of affine infra-nilmanifold endomorphisms, $\bar{\alpha}$ does not always have a lift to the nilmanifold $N \backslash G$ with $N = \Gamma \cap G$. There does exist a nilmanifold though which finitely covers $\Gamma \backslash G$ and such that every map has a lift to this nilmanifold.

\begin{Thm}
Let $\Gamma \backslash G$ be an infra-nilmanifold, then there exists a nilmanifold $N_0 \backslash G$ such that every map $\bar{\alpha}: \Gamma \backslash G \to \Gamma \backslash G$ induced by an $\alpha \in \aff(G)$ has a lift $\tilde{\alpha}$ to $N_0 \backslash G$. Moreover, if $p: N_0 \backslash G \to \Gamma \backslash G$ is the covering map, then \begin{align*}
p^{-1}\left(\ePer(\bar{\alpha})\right)& =\ePer(\tilde{\alpha})\\
\Per(\bar{\alpha})& = p\left(\Per(\tilde{\alpha}\right).
\end{align*}
\end{Thm}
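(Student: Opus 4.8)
The plan is to isolate a single nilmanifold, independent of $\alpha$, that finitely covers $\Gamma \backslash G$ and to which every $\bar\alpha$ lifts; once this is available the two displayed equalities are immediate from Proposition~\ref{cover}. First I would record the descent criterion. If $N_0 \le G$ is a lattice with $N_0 \le \Gamma$ of finite index, then $N_0 \backslash G$ is a nilmanifold and $p \colon N_0 \backslash G \to \Gamma \backslash G$ is a finite covering map. Writing $\alpha = (g,\delta)$, the affine map $\alpha$ descends to a well-defined self-map $\tilde\alpha$ on $N_0 \backslash G$ exactly when $\alpha N_0 \subseteq N_0 \alpha$, and a direct computation with the multiplication of $\aff(G)$ shows this is equivalent to $\mu(N_0) \subseteq N_0$, where $\mu = c_g \circ \delta \in \Endo(G)$ is the linear part conjugated by the translation part (here $c_g$ denotes conjugation by $g$). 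Any such $\tilde\alpha$ is automatically a lift of $\bar\alpha$, since both are induced by $\alpha$ acting on the common covering group $G$.

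The candidate I would use is $N_0 = \langle \gamma^{|F|} \mid \gamma \in \Gamma \rangle$, the subgroup generated by all $|F|$-th powers. Since $p(\gamma)^{|F|} = 1$ in $F$, every such power lies in $N = \Gamma \cap G$, so $N_0 \le N \le G$; as $N_0 \supseteq N^{|F|}$ and $N$ is finitely generated nilpotent, the quotient $N/N^{|F|}$ is finite, so $N_0$ has finite index in $N$ and hence in $\Gamma$. Therefore $N_0$ is a torsion-free lattice of $G$, the quotient $N_0 \backslash G$ is a nilmanifold finitely covering $\Gamma \backslash G$, and $N_0$, being verbal, is characteristic in $\Gamma$. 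Crucially, $N_0$ does not depend on $\alpha$.

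The heart of the proof — and the step I expect to be the main obstacle — is verifying the descent $\alpha N_0 \subseteq N_0 \alpha$ for every valid $\alpha$. Here I would exploit that $N_0$ is generated by $|F|$-th powers. Fix $\gamma \in \Gamma$; by $\alpha \Gamma \subseteq \Gamma \alpha$ choose $\gamma^\ast \in \Gamma$ with $\alpha \gamma = \gamma^\ast \alpha$. Iterating this relation yields $\alpha \gamma^{|F|} = (\gamma^\ast)^{|F|} \alpha$. Now $(\gamma^\ast)^{|F|} \in N_0 \le G$ is a pure translation, while on the other hand $\alpha \gamma^{|F|} = \mu(\gamma^{|F|}) \alpha$ with $\mu(\gamma^{|F|}) \in G$ also a pure translation; comparing the two expressions for $\alpha \gamma^{|F|}$ and cancelling $\alpha$ forces $\mu(\gamma^{|F|}) = (\gamma^\ast)^{|F|} \in N_0$. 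Since $\mu$ is a group endomorphism of $G$ and the elements $\gamma^{|F|}$ generate $N_0$, this gives $\mu(N_0) \subseteq N_0$, which is the descent criterion. The subtlety to handle carefully is exactly this matching of pure-translation parts: a priori $\alpha \gamma$ may be written as $\gamma^\ast \alpha$ with $\gamma^\ast$ of non-trivial holonomy, and it is only after passing to $|F|$-th powers that the holonomy disappears and the resulting element is forced into $N_0$.

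Finally I would assemble the conclusion. The self-map $\tilde\alpha$ supplied by the descent criterion lifts $\bar\alpha$ along the finite cover $p$, so Proposition~\ref{cover}$(1)$ gives $p^{-1}(\ePer(\bar\alpha)) = \ePer(\tilde\alpha)$ and Proposition~\ref{cover}$(2)$ gives $\Per(\bar\alpha) = p(\Per(\tilde\alpha))$, which are precisely the two asserted equalities. Note that only parts $(1)$ and $(2)$ are available: since $\delta$ need not be invertible, the map induced by $\alpha$ on the deck group of $p$ need not be injective, so Proposition~\ref{cover}$(3)$ does not apply. This explains why one obtains $\Per(\bar\alpha) = p(\Per(\tilde\alpha))$ but not the equality of $\Per(\tilde\alpha)$ with the full preimage $p^{-1}(\Per(\bar\alpha))$.
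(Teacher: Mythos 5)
Your proposal is correct and follows essentially the same route as the paper: you choose the same subgroup $N_0 = \Gamma^{\vert F \vert} = \langle \gamma^{\vert F \vert} \mid \gamma \in \Gamma \rangle$, observe it is a finite-index lattice consisting of pure translations, and deduce both equalities from Proposition~\ref{cover}(1) and (2). The only difference is one of detail: where the paper simply invokes that $N_0$ is fully characteristic (fully invariant under all endomorphisms, which is what the iteration $\alpha\gamma^{\vert F\vert} = (\gamma^\ast)^{\vert F\vert}\alpha$ in your argument verifies directly), you spell out the descent computation explicitly — a correct and welcome expansion of the same idea.
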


\noindent For giving a description of the set $\Per(\bar{\alpha})$ and studying its density, this weaker version of Theorem \ref{reduc} is enough. 

\begin{proof}
Let $F$ be the holonomy group of $\Gamma$ and take $$N_0 = \Gamma^{\vert F \vert} = \langle \gamma^{\vert F \vert} \mid \gamma \in \Gamma \rangle \le G.$$ Since $N_0$ is a fully characteristic subgroup, every map has a lift to $N_0 \backslash G$. The other claims follow immediately from Proposition \ref{cover}. 
\end{proof}

The other results of Section \ref{reduction2} generalize without problem. In particular, also Theorem \ref{topcon} is true for maps induced by affine maps of $\aff(G)$.
\begin{Thm}
Let $\bar{\alpha}$ be a map on the infra-nilmanifold $\Gamma \backslash G$, then $\bar{\alpha}$ is topologically conjugate to a map $\bar{\delta}$ induced by $\delta \in \Endo(G)$ on some infra-nilmanifold if and only if it has a fixed point.
\end{Thm}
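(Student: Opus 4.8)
The plan is to follow the proof of Theorem \ref{topcon} almost verbatim, the only genuinely new issue being that the linear part $\delta$ of $\alpha$ now lies in $\Endo(G)$ and need not be invertible. First I would dispose of the easy direction: if $\bar{\delta}$ is induced by some $\delta \in \Endo(G)$ on an infra-nilmanifold $\Gamma' \backslash G$, then $\delta(e) = e$ forces $\Gamma' e$ to be a fixed point of $\bar{\delta}$, and since a topological conjugacy carries fixed points to fixed points by Proposition \ref{basictopcon}, any $\bar{\alpha}$ conjugate to such a $\bar{\delta}$ must itself have a fixed point.

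For the converse, write $\alpha = (g, \delta)$ and let $\Gamma g_0$ be a fixed point of $\bar{\alpha}$. I would take the conjugate group $\Gamma' = g_0^{-1} \Gamma g_0$, which is again an almost-Bieberbach group modeled on $G$ (so that $\Gamma' \backslash G$ is a bona fide infra-nilmanifold), together with the homeomorphism $h \colon \Gamma \backslash G \to \Gamma' \backslash G$, $h(\Gamma x) = \Gamma' g_0^{-1} x$, whose lift to $G$ is the left translation $L_{g_0^{-1}}$. The fixed-point hypothesis rewrites as $c := g_0^{-1} g \delta(g_0) \in \Gamma'$, and exactly the computation of Theorem \ref{topcon} then gives $h \bar{\alpha} h^{-1}(\Gamma' x) = \Gamma' c\, \delta(x) = \Gamma' \delta(x)$, so that $h \bar{\alpha} h^{-1} = \bar{\delta}$, with $\bar{\delta}$ induced by the plain endomorphism $(e, \delta) \in \aff(G)$.

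The step where the earlier argument used invertibility, and hence the main obstacle, is the well-definedness of $\bar{\delta}$ on $\Gamma' \backslash G$: in Theorem \ref{topcon} this followed from the two-sided relation $\delta \Gamma' \delta^{-1} \le \Gamma'$, which is meaningless when $\delta^{-1}$ does not exist. Instead I would verify the correct one-sided semigroup condition $(e, \delta) \Gamma' \subseteq \Gamma' (e, \delta)$ by hand. Given $\gamma' = g_0^{-1} \gamma g_0 \in \Gamma'$, I would move $(e, \delta)$ to the right using the identity $\delta \circ L_x = L_{\delta(x)} \circ \delta$ together with the hypothesis $\alpha \Gamma \subseteq \Gamma \alpha$, which yields $(e, \delta) \gamma = g^{-1} \tilde{\gamma} g\, (e, \delta)$ for some $\tilde{\gamma} \in \Gamma$. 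Writing the fixed-point identity as $g \delta(g_0) = \gamma_0 g_0$ with $\gamma_0 \in \Gamma$ and substituting, the prefactor telescopes to $g_0^{-1} (\gamma_0^{-1} \tilde{\gamma} \gamma_0) g_0 \in \Gamma'$. Hence $(e, \delta) \gamma' \in \Gamma' (e, \delta)$, which is precisely what makes $\bar{\delta}$ a well-defined self-map of $\Gamma' \backslash G$ induced by an element of $\Endo(G)$, completing the argument.
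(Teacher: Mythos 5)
Your proposal is correct and is essentially the paper's own argument: the paper proves this theorem merely by remarking that the proof of Theorem \ref{topcon} generalizes, and your proof is precisely that of Theorem \ref{topcon} transported to $\aff(G)$, with your one-sided verification $(e,\delta)\,\Gamma^{\prime} \subseteq \Gamma^{\prime}\,(e,\delta)$ (via $\delta \comp L_x = L_{\delta(x)} \comp \delta$ and the hypothesis $\alpha \Gamma \subseteq \Gamma \alpha$) correctly supplying the single step where invertibility of $\delta$ was used. The only caveat, which your write-up inherits from the paper's proof of Theorem \ref{topcon} itself, is that writing the fixed-point identity as $g\delta(g_0) = \gamma_0 g_0$ with $\gamma_0 \in \Gamma$ tacitly assumes the returning element $\gamma_0$ is a pure translation; if $\gamma_0 = (a,A)$ has nontrivial holonomy $A$, the same telescoping shows $h \bar{\alpha} h^{-1}$ is the map induced by $A^{-1}\comp \delta \in \Endo(G)$ rather than by $\delta$, which still proves the theorem as stated, since it only asserts conjugacy to a map induced by \emph{some} endomorphism of $G$.
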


\subsection*{Sufficient condition}

The results of this paragraph about eventually periodic points generalize immediately.

\begin{Thm}
\label{epergen}
Let $N$ be a lattice of the nilpotent Lie group $G$ with radicable hull $N^\Q$. If $\bar{\delta}: N \backslash G \to N \backslash G$ is a map induced by a group morphism $G \to G$ then $$p(N^\Q) \subseteq \ePer(\bar{\delta}).$$
\end{Thm}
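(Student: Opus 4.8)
The plan is to observe that the proof of Theorem~\ref{subeper} goes through unchanged, since it never appealed to the invertibility of $\delta$. Concretely, I would fix $n \in N^\Q$ and set $s = \ordn(n)$, so that $n \in N^{\frac{1}{s}}$. By Proposition~\ref{stothek} the lattice $N$ has finite index in $N^{\frac{1}{s}}$, hence the image $p(N^{\frac{1}{s}})$ is a finite subset of the nilmanifold $N \backslash G$.

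The one step to verify is that $\bar{\delta}$ restricts to a self-map of this finite set, i.e.\ that $\delta(N^{\frac{1}{s}}) \le N^{\frac{1}{s}}$. This is exactly the computation carried out just before Theorem~\ref{subeper}: for any group morphism $\varphi$ with $\varphi(N) \le N$ one has $\varphi(g)^{\ordn(g)} = \varphi\big(g^{\ordn(g)}\big) \in \varphi(N) \le N$, so that $\ordn(\varphi(g)) \mid \ordn(g)$ and therefore $\varphi(X_s) \subseteq X_s$. This reasoning uses only that $\varphi$ is a morphism sending $N$ into $N$; it is completely insensitive to whether $\varphi$ is invertible. Since $\delta$ is induced by a group morphism $G \to G$ with $\delta(N) \le N$ (and hence $\delta(N^\Q) \le N^\Q$, because $\delta$ commutes with the extraction of roots in the torsion-free radicable group $N^\Q$), we conclude $\delta(N^{\frac{1}{s}}) \le N^{\frac{1}{s}}$, so $\bar{\delta}$ maps the finite set $p(N^{\frac{1}{s}})$ into itself.

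The conclusion then follows from Lemma~\ref{elem}: for any self-map of a finite set one has $\ePer(f) = S$, so every point of $p(N^{\frac{1}{s}})$, and in particular $p(n)$, is eventually periodic for $\bar{\delta}$. As $n \in N^\Q$ was arbitrary, this yields $p(N^\Q) \subseteq \ePer(\bar{\delta})$. I do not expect a genuine obstacle here: the only place in the earlier development where invertibility of the linear part was essential is the construction of \emph{periodic} points in Theorem~\ref{perD}, where the injective case of Lemma~\ref{elem} is invoked; for \emph{eventually} periodic points one needs only the unconditional statement $\ePer(f) = S$, which holds for every map of a finite set. Thus the sole point worth double-checking is the invariance $\delta(N^{\frac{1}{s}}) \le N^{\frac{1}{s}}$ in the non-invertible setting, and this is handled by the divisibility $\ordn(\delta(g)) \mid \ordn(g)$ that, as noted above, requires nothing beyond $\delta$ being a morphism with $\delta(N) \le N$.
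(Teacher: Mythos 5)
Your proposal is correct and matches the paper's own treatment: the paper proves Theorem \ref{epergen} simply by remarking that the proof of Theorem \ref{subeper} "generalizes immediately," and your argument spells out exactly why — the invariance $\delta(N^{\frac{1}{s}}) \le N^{\frac{1}{s}}$ rests only on the divisibility $\ordn(\delta(g)) \mid \ordn(g)$, which needs $\delta$ to be a morphism with $\delta(N) \le N$ but not invertible, and then the non-injective case of Lemma \ref{elem} finishes the proof.
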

\noindent The results about periodic points are harder to generalize since the linear part of $\alpha$ is not injective and therefore are not discussed here. It is certainly not true that the set of periodic points of such a map is dense in the manifold.

\subsection*{Necessary condition}

Since Theorem \ref{coset} is true for general group morphisms of $\mathcal{F}$-groups, we can still use this result in the generalized case.

\begin{Thm}
Let $\Gamma \backslash G$ be an infra-nilmanifold and $\bar{\alpha}: \Gamma \backslash G \to \Gamma \backslash G$ a map induced by the affine map $\alpha = (g,\delta) \in \aff(G)$. If $\bar{\alpha}$ has a periodic point $\Gamma g_0$, then $$\ePer(\bar{\alpha}) = p(N^\Q g_0 H^\R)$$ with $$H^\R = \{h \in G \suchthat \exists k_1, k_2 > 0:  \delta^{k_1} (h) =\delta^{k_2}(h)\}.$$
\end{Thm}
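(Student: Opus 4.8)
The plan is to transcribe the proof of the corollary to Theorem \ref{fulldes} describing $\ePer(\bar\alpha)$ into the non-invertible setting, using the generalized reduction and conjugacy results of this section, so that the only genuinely new ingredient is a version of Theorem \ref{fulldes} that allows a non-invertible linear part. Since $\ePer(\bar\alpha)=\ePer(\bar\alpha^k)$, I first replace $\bar\alpha$ by a power so that the periodic point $\Gamma g_0$ becomes a fixed point; the generalization of Theorem \ref{topcon} stated above then conjugates $\bar\alpha$ to the map $\bar\delta$ induced by the linear part $\delta\in\Endo(G)$ on $\Gamma'\backslash G$ with $\Gamma'=g_0^{-1}\Gamma g_0$, and the reduction theorem of this section lets me pass to a finite nilmanifold cover. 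Passing to a power replaces $\delta$ by $\delta^m$, so I record at the outset that $H^\R$ is unchanged under $\delta\mapsto\delta^m$: if $\delta^{k_1}(h)=\delta^{k_2}(h)$ with $k_1<k_2$, then the $\delta$-orbit of $h$ is eventually periodic, and synchronizing the exponents to a common multiple of $m$ exceeding $k_1$ shows $h$ satisfies such a relation for $\delta^m$ as well.

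The heart of the matter is the nilmanifold-endomorphism analogue: for $\bar\delta$ induced by $\delta\in\Endo(G)$ with $\delta(N)\subseteq N$, I claim $\ePer(\bar\delta)=p(N^\Q H^\R)$. For $\subseteq$, an eventually periodic coset $Ng$ satisfies $N\delta^{k_1}(g)=N\delta^{k_2}(g)$, equivalently $\delta^{k_2}(g)\,\delta^{k_1}(g)^{-1}\in N^\Q$, for some $0\le k_1<k_2$, and applying $\bar\delta$ once more I may assume $k_1\ge 1$. The single place where non-invertibility bites is that I must \emph{not} cancel $\delta^{k_1}$, which is no longer legitimate; instead I apply Theorem \ref{coset} directly with $\varphi=\delta^{k_2}$ and $\psi=\delta^{k_1}$. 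This yields $g\in N^\Q H_0^\R$ with $H_0^\Q=\{h:\delta^{k_1}(h)=\delta^{k_2}(h)\}$, and since $H_0^\Q\subseteq H^\R$ and $H^\R=\exp\{X:\exists\,j_1,j_2>0,\ (d\delta)^{j_1}X=(d\delta)^{j_2}X\}$ is a closed connected subgroup, its radicable hull satisfies $H_0^\R\subseteq H^\R$, whence $g\in N^\Q H^\R$.

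For the reverse inclusion I would argue by synchronization rather than by the power-reduction used in Theorem \ref{fulldes}. Writing $g=nh$ with $n\in N^\Q$ and $h\in H^\R$, Theorem \ref{epergen} makes $p(n)$ eventually periodic, so $N\delta^{t+c}(n)=N\delta^t(n)$ for all $t\ge a$ and some $c>0$, while $h\in H^\R$ gives $\delta^{t+d}(h)=\delta^t(h)$ for all $t\ge b$ and some $d>0$. Taking $k_1\ge\max(a,b)$ and $k_2=k_1+\operatorname{lcm}(c,d)$ and using $\delta^k(nh)=\delta^k(n)\,\delta^k(h)$, both factors resynchronize and $N\delta^{k_1}(g)=N\delta^{k_2}(g)$, so $p(g)\in\ePer(\bar\delta)$.

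It remains to assemble the pieces. The reduction theorem identifies $\ePer(\bar\delta)$ with the image of $\ePer(\tilde\delta)$ under the finite cover $N_0'\backslash G\to\Gamma'\backslash G$ with $N_0'=(\Gamma')^{|F|}$, and since $N_0'$ has finite index in $N'=\Gamma'\cap G$ their radicable hulls coincide; hence the computation above yields $\ePer(\bar\delta)=p'\big((g_0^{-1}N^\Q g_0)H^\R\big)$ for the universal covering projection $p'\colon G\to\Gamma'\backslash G$, using $(N')^\Q=g_0^{-1}N^\Q g_0$ from Remark \ref{explicitform}. Undoing the conjugacy, whose lift to $G$ is $L_{g_0^{-1}}$, the set $\ePer(\bar\alpha)$ is the image of $\ePer(\bar\delta)$ under left translation by $g_0$, namely $p(g_0\,g_0^{-1}N^\Q g_0\,H^\R)=p(N^\Q g_0 H^\R)$, as claimed. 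I expect the forward inclusion — keeping both $\varphi=\delta^{k_2}$ and $\psi=\delta^{k_1}$ nontrivial and verifying that $H^\R$ is a closed subgroup stable under passing to powers — to be the only real obstacle; everything else is bookkeeping parallel to the invertible case.
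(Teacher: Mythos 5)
Your proof is correct, and its core follows the paper's own prescription: the paper proves this theorem by repeating the argument of Theorem \ref{fulldes} and, at the single step where invertibility of $\delta^{k_1}$ was used, applying Theorem \ref{coset} directly to the pair $\varphi = \delta^{k_2}$, $\psi = \delta^{k_1}$ --- exactly what you do, including the assembly through the generalized versions of Theorems \ref{topcon} and \ref{reduc} and the final translation by $g_0$ as in Remark \ref{explicitform}. Where you genuinely depart from the paper is in the reverse inclusion. The paper declares that part ``identical'' to Theorem \ref{fulldes}, but that proof begins by replacing $\delta$ with a power of itself so that $\delta(h) = h$ for all $h \in H^\R$; in the non-invertible setting this is impossible, since the Lie algebra of $H^\R$ now contains the generalized eigenspace of the eigenvalue $0$, on which no power of $\delta$ restricts to the identity. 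Your synchronization argument --- using that the orbit of $p(n)$ is eventually periodic by Theorem \ref{epergen}, that the orbit of $h$ in $G$ is eventually periodic by definition of $H^\R$, and then choosing $k_1 \geq \max(a,b)$ and $k_2 = k_1 + \operatorname{lcm}(c,d)$ --- repairs exactly this step; likewise, your preliminary check that $H^\R$ is unchanged under $\delta \mapsto \delta^m$ is needed (and not recorded in the paper) once you replace $\bar{\alpha}$ by a power to turn the periodic point into a fixed point. One small remark: your reduction to $k_1 \geq 1$ in the forward inclusion is unnecessary, since Theorem \ref{coset} applies equally well with $\psi$ the identity, but it is harmless. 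So your write-up is not only correct but, at the reverse inclusion, more careful than the paper's two-line proof.
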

\noindent The subgroup $H^\R$ corresponds to the Lie subalgebra given by the generalized eigenspace of the eigenvalue $0$ and the eigenspaces of eigenvalues which are roots of unity. 

\begin{proof}
The proof is identical to the proof of Theorem \ref{fulldes}, except for the step where we use that $\delta^{k_1}: N^\Q \to N^\Q$ is invertible. Instead, we use Theorem \ref{coset} directly on the maps $\delta^{k_1}$ and $\delta^{k_2}$ to get the result.
\end{proof}

So the situation for eventually periodic points can be completely generalized to maps induced by an affine map $\alpha \in \aff(G)$. The following question is still completely open.

\begin{QN}
How does the set of (eventually) periodic points look like for a map induced by an affine map $\alpha \in \aff(G)$ on an infra-nilmanifold modeled on $G$?
\end{QN}

Note that this question is a generalization of Question \ref{QN1} to the bigger class of self-maps introduced in this section.
\bibliography{ref}
\bibliographystyle{plain}

\end{document}